\documentclass{amsart}[12pt]

\usepackage{amsfonts,amssymb,amsmath,amscd,amsthm, mathrsfs}
\usepackage{graphicx}
\usepackage{epstopdf}
\usepackage{hyperref}
\usepackage{upgreek}

\newtheorem{thm}{Theorem}[section]
\newtheorem{prop}{Proposition}[section]
\newtheorem{lm}{Lemma}[section]
\newtheorem{cor}{Corollary}[section]

\theoremstyle{definition}
\newtheorem{dfn}{Definition}[section]

\theoremstyle{remark}
\newtheorem{rem}{Remark}[section]

\newcommand{\rr}{\mathbb{R}}
\newcommand{\zz}{\mathbb{Z}}

\newcommand{\ey}{\frac{1}{2}}

\newcommand{\A}{\mathcal{A}}

\newcommand{\aep}{\mathcal{A}^{\ep}}
\newcommand{\aen}{\mathcal{A}^{\ep_n}}
\newcommand{\F}{\mathcal{F}}
\newcommand{\U}{\mathfrak{U}}

\newcommand{\e}{\mathbf{e}}

\newcommand{\I}{\mathbf{I}}

\newcommand{\ic}{\mathcal{I}}
\newcommand{\jc}{\mathcal{J}}

\newcommand{\X}{\mathcal{X}}

\newcommand{\N}{\mathbf{N}}

\newcommand{\al}{\alpha}

\newcommand{\dl}{\delta}

\newcommand{\ep}{\varepsilon}

\newcommand{\Lmd}{\Lambda}
\newcommand{\lmd}{\lambda}
\newcommand{\om}{\omega}

\newcommand{\zt}{\zeta}

\newcommand{\en}{\ep_n}
\newcommand{\dn}{\dl_n}

\newcommand{\qd}{\dot{q}}

\newcommand{\qt}{\tilde{q}}

\newcommand{\qn}{q^n}

\newcommand{\xin}{\xi^n}
\newcommand{\etn}{\eta^n}
\newcommand{\ztn}{\zt^n}
\newcommand{\xih}{\hat{\xi}}
\newcommand{\zth}{\hat{\zt}}

\newcommand{\mo}{m^{-}}

\newcommand{\ut}{\widetilde{U}}
\newcommand{\fut}{\widetilde{\U}}

\begin{document}

	\title[Application of Morse index]{Application of Morse index in weak force $N$-body problem}
	\author{Guowei Yu}\thanks{The author acknowledges the support of the ERC Advanced Grant 2013 No.  339958 ``Complex Patterns for Strongly Interacting Dynamical Systems - COMPAT''} 

    \email{guowei.yu@unito.it}

 	\address{Dipartimento di Matematica ``Giuseppe Peano'', Universit\`a degli Studi di Torino, Italy}

	\begin{abstract} Due to collision singularities, the Lagrange action functional of the N-body problem in general is not differentiable. Because of this, the usual critical point theory can not be applied to this problem directly. Following ideas from \cite{BR91}, \cite{Tn93a} and \cite{ABT06}, we introduce a notion called weak critical point for such an action functional, as a generalization of the usual critical point. A corresponding definition of Morse index for such a weak critical point will also be given. Moreover it will be shown that the Morse index gives an upper bound of the number of possible binary collisions in a weak critical point of the $N$-body problem with weak force potentials including the Newtonian potential. 
	\end{abstract}
	
	\maketitle
	
\section{Introduction} \label{sec:intro}

The motion of $N$ point masses, $m_i>0, i \in \N:=\{1, \dots, N\}$, under the universal gravitational force is a classic problem that has been studied by many authors since the time of Newton. Let $q_i \in \rr^d$ be the position of mass $m_i$, then it satisfies the following equation
\begin{equation}
\label{eq: N body} m_i \ddot{q}_i = \frac{\partial U(q)}{\partial q_i} = - \al \sum_{j \in \N \setminus \{i\}}   m_i m_j\frac{q_i -q_j}{|q_i-q_j|^{\al+2}}, \;\; \forall i \in \N,
\end{equation}
where $q=(q_i)_{i \in \N} \in \rr^{dN}$ and $U(q)= \sum_{\{i < j\} \subset \N} \frac{m_im_j}{|q_i-q_j|^{\al}}$ is the potential function (the negative potential energy). $\al$ here is a positive constant.

Traditionally $U$ is called a \emph{strong force potential}, when $\al \ge 2$, and a \emph{weak force potential}, when $0 < \al <2$. The Newtonian potential is a weak force potential corresponding to $\al=1$. 

This is a singular Lagrange system with the Lagrangian
$$ L(q, \qd):= K(\qd) + U(q), \; \text{ where } K(\qd) = \ey \sum_{i \in \N} m_i |\qd_i|^2.$$
The singularities are caused by collisions between two or more masses
\begin{equation}
\label{eq: collision} \Delta := \{q \in \rr^{dN}: \; q_i = q_j, \text{ for some } \{i \ne j \} \subset \N \}. 
\end{equation}
For any $\I \subset \N$ with $|\I| \ge 2$ ($|\I|$ is the cardinality of $\I$), we say $q$ has an \emph{$\I$-cluster collision} at the moment $t$, when 
\begin{equation*} \forall i \in \I, \;\; 
\begin{cases}
 q_i(t)= q_j(t), \; & \text{ if } j \in \I, \\
 q_i(t) \ne q_j(t), \; & \text{ if } j \in \N \setminus \I,
\end{cases}
 \end{equation*}
An $\I$-cluster collision is a \emph{binary collision}, when $|\I|=2$. 

Let $H^1([T_1, T_2], \rr^{dN})$ be the space of Sobolev paths defined on $[T_1, T_2]$, and $\hat{\rr}^{dN} := \rr^{dN} \setminus \Delta$ the set of collision-free configurations, we say a path $q \in H^1([T_1, T_2], \rr^{dN})$ is \emph{collision-free}, if $q(t) \in \hat{\rr}^{dN}$, for any $t \in [T_1, T_2]$. It is well known the Lagrange action functional 
\begin{equation}
\label{eq: action functional} \A(q; T_1, T_2):= \int_{T_1}^{T_2} L(q, \qd) \,dt, \;\; \forall q \in H^1([T_1, T_2], \rr^{dN}),
\end{equation}
is $C^2$ on $H^1([T_1, T_2], \hat{\rr}^{dN})$ (see \cite{AZ93}), and a critical point of $\A$ in $H^1([T_1, T_2], \hat{\rr}^{dN})$ is a classical solution of \eqref{eq: N body}. 

The solution of \eqref{eq: N body} is invariant under linear translations, in many cases it will be more convenient to fix the center of mass at the origin, so we set
$$ \X:=\{ q \in \rr^{dN}: \; \sum_{i \in \N} m_i q_i = 0 \}, \;\; \hat{\X} : = \X \setminus \Delta. $$
As the action functional is also invariant under linear translation, a critical point of $\A$ in $H^1([T_1, T_2], \hat{\X})$ will be a classical solution of \eqref{eq: N body} as well.

In general it is much easier to apply variational methods to the $N$-body, when the potential is a \emph{strong force}, i.e. $\al \ge 2$, as in this case any path with a finite action value must be collision-free, see \cite{CGMS02}. It is not so, when the potential is a \emph{weak force}, i.e. $\al \in (0, 2)$, as the attracting force between the masses are too weak and the action value of a path with collision may still be finite, see \cite{Go77}. 

Because of this, for the strong force $N$-body problem, many results have been obtained by different authors using both minimization and non-minimization variational methods, see \cite{BR91}, \cite{AZ93}, \cite{MT93a}, \cite{MT93b}, \cite{Mo98}, \cite{CGMS02} and the references within. However the problem is much more difficult for weak force potentials due to the possibility of collision. Set back by this, Bahri and Rabinowitz introduced the so called \emph{generalized solution} in \cite{BR89} and \cite{BR91} (see Definition \ref{dfn: generalized solution}), where such a solution is allowed to have a non-empty set of collision moments with zero Lebesgue measure. Here we are only interested in the weak force $N$-body problem, so we assume $\al \in (0, 2)$ in the rest of the paper.

For action minimization methods, the breakthrough followed the proofs of the Figure-Eight solution of the three body by Chenciner and Montgomery \cite{CM00} and the Hip-Hop solution of the four body by Chenciner and Venturelli \cite{CV00}, where both solutions are found as collision-free minimizers of the action functional under proper symmetric constraints. Since then action minimization methods have thrived in the study of the $N$-body problem with Newtonian potential as well as other weak force potentials. We refer the interesting readers to \cite{C02}, \cite{FT04}, \cite{Ch08}, \cite{Y15b}, \cite{Yu17} and the references within. 

Now it is more or less well understood, when we can show an action minimizer is collision-free. 
\begin{dfn}
\label{dfn: local minimizer} $q \in H^1([T_1, T_2], \X)$ is a \textbf{local action minimizer of $\A$ with fixed-ends} in $H^1([T_1, T_2], \X)$ , if there is a $\dl>0$ small enough, such that 
$$ \A(q; T_1, T_2) \le \A(q+\qt; T_1, T_2), \;\; \forall \qt \in H^1_0([T_1, T_2], \X) \text{ with }  \|\qt\|_{H^1}\le \dl,$$
where $H^1_0([T_1, T_2], \X)$ is the space of Sobolev paths with compact support in $[T_1, T_2]$. 
\end{dfn}
The following fundamental result is due to Marchal \cite{Mc02} and Chenciner \cite{C02}, when $\al=1$, and Ferrario and Terracini \cite{FT04}, when $\al \in (0, 2)$. 
\begin{thm}
\label{thm: fixed ends minimizer} For any $\al \in (0, 2)$, when $d \ge 2$, if $q$ is a local action minimizer of $\A$ with fixed-ends in $H^1([T_1, T_2], \X)$, then $q(t)$ is collision-free, for any $t \in (T_1, T_2)$. 
\end{thm}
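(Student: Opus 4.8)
The plan is to argue by contradiction: suppose a local action minimizer with fixed ends $q$ has a collision at some interior instant $t_0 \in (T_1, T_2)$, and construct a compactly supported variation $q+\qt$, $\qt \in H^1_0([T_1,T_2],\X)$, with strictly smaller action, contradicting Definition~\ref{dfn: local minimizer}.

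First I would localize. Restricting to a short interval $[t_0-\dl, t_0+\dl] \subset (T_1, T_2)$, one checks that $q$ remains a local minimizer there with the (now fixed) ends $q(t_0 \pm \dl)$, so it suffices to work on $[t_0-\dl, t_0+\dl]$. Next, invoking the Sundman--Sperling type asymptotic estimates available for action minimizers with collisions, I would record the structure at $t_0$: the colliding indices split into clusters, and each colliding cluster, after the blow-up rescaling in which the internal size behaves like $r(t) \sim c\,|t-t_0|^{2/(2+\al)}$, converges to a homothetic--parabolic collision--ejection motion along a central configuration of the cluster; in particular the path is collision-free on a punctured neighborhood of $t_0$, so we may assume $q(t_0 \pm \dl)$ are collision-free and the collision at $t_0$ is isolated.

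The core step is the "better variation" near the collision, following Marchal \cite{Mc02}, Chenciner \cite{C02} and Ferrario--Terracini \cite{FT04}. For a configuration $x \in \X$ close to $q(t_0)$ set
\begin{multline*}
\phi(x) := \inf\Big\{ \A(\gm; t_0-\dl, t_0) : \gm(t_0-\dl)=q(t_0-\dl),\ \gm(t_0)=x \Big\} \\
{}+ \inf\Big\{ \A(\gm; t_0, t_0+\dl) : \gm(t_0)=x,\ \gm(t_0+\dl)=q(t_0+\dl) \Big\}.
\end{multline*}
Local minimality of $q$ forces $\phi(q(t_0)) \le \phi(x)$ for every $x$ near $q(t_0)$. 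Passing to Jacobi coordinates adapted to a colliding cluster $\I$, one shows $\phi$ decomposes, up to a remainder that is $C^1$ (indeed smooth) across the collision configuration, as the internal Keplerian contribution $\psi(\zt)$ of the cluster --- governed by a potential of the form $\mu\,|\zt|^{-\al}$ plus bounded terms --- plus that regular remainder. Hence it is enough to show $\psi$ has no local minimum at $\zt=0$. This is the heart of the matter and exactly where $d\ge 2$ enters: using the freedom to rotate the colliding branch around the singularity, Marchal's averaging lemma gives that the average of $\psi$ over the orbit, under the action of $SO(d)$ on the internal coordinates, of a small configuration at distance $\rho$ from $0$ is \emph{strictly} less than $\psi(0)$ --- the weak-force singularity $|\zt|^{-\al}$ with $\al \in (0,2)$ makes such detours cheaper on average, while the kinetic cost stays controlled by the parabolic asymptotics. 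Therefore some rotated configuration $x$ has $\phi(x) < \phi(q(t_0))$; splicing the two corresponding minimizing arcs produces a competitor with the same ends $q(t_0\pm\dl)$, $H^1$-close to $q$, and strictly smaller action on $[t_0-\dl,t_0+\dl]$. Extending it by $q$ on the rest of $[T_1,T_2]$ contradicts Definition~\ref{dfn: local minimizer}, so $q(t)$ is collision-free for all $t \in (T_1,T_2)$.

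I expect the main obstacle to be this core step: making the splitting of $\phi$ rigorous needs the sharp asymptotic estimates at the collision so that the "regular remainder" is genuinely differentiable across $\Delta$ with a lower-order variation, and the averaging inequality requires tracking that the gain in the potential term dominates the change in the kinetic term uniformly as $\rho \to 0$. When $d=1$ the $SO(d)$ orbit is trivial and the entire scheme collapses, consistent with the hypothesis $d \ge 2$.
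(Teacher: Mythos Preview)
The paper does not give its own proof of this theorem: it is stated as a known result and attributed to Marchal \cite{Mc02} and Chenciner \cite{C02} for $\al=1$, and to Ferrario and Terracini \cite{FT04} for general $\al\in(0,2)$. Your sketch is a reasonable outline of precisely that literature's argument (localize near an isolated collision, use the parabolic blow-up asymptotics, and apply Marchal's averaging over rotations to produce a strictly cheaper competitor), so in spirit it agrees with what the paper invokes.

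One technical point worth flagging in your outline: the step ``local minimality of $q$ forces $\phi(q(t_0))\le\phi(x)$ for every $x$ near $q(t_0)$'' is not immediate, since the minimizing arcs realizing $\phi(x)$ need not be $H^1$-close to $q$; the cited works bypass this by constructing an \emph{explicit} local variation (the Marchal/Ferrario--Terracini perturbation of the colliding cluster) and estimating the action change directly, rather than by studying $\phi$ as a function of the midpoint. This is exactly the obstacle you identify at the end, and it is where the real work in \cite{FT04} lies.
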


Despite of the above progress, by our knowledge, for the $N$-body problem no result seems to be available regarding how to rule out collision when the corresponding path is obtained through non-minimization methods, like minimax or mountain-pass. Meanwhile for a special type of singular Lagrange systems with weak force potentials (essentially equivalent to perturbations of the $N$ center problem), using \emph{Morse index theory}, in a series of papers (\cite{Tn93a}, \cite{Tn93b}, \cite{Tn94a}, \cite{Tn94b}), Tanaka showed how to rule out collisions when a critical point was obtained by the minimax approach of Bahri and Rabinowitz \cite{BR89}. The main purpose of our paper is to generalize Tanaka's idea to the $N$-body problem and show that the Morse index of a critical point can be used to give an upper bound of the number of binary collisions that could occur in it. 


To give the precise statement, we recall the following definition according to \cite{ABT06}. 
\begin{dfn}
\label{dfn: morse index}  Let $X$ be a Hilbert space and $\Omega$ an open and dense subspace of it, i.e. $\bar{\Omega} = X$. If a functional $\F$ is lower semi-continuous in $X$ and $C^2$ in $\Omega$, then $c \in \rr$ will be called a \textbf{critical value} of $\F$, if there is $q \in \Omega$, such that $\F(q)=c$ and the first derivative of $\F$ vanishes at $q$, i.e. $d \F(q)=0$. Moreover such a $q$ will be called a \textbf{critical point} of $\F$.

If $q \in \Omega$ is a critical point of $\F$, we define its \textbf{Morse index} (with respect to $\F$), $m^-_{\Omega}(q, \F)$, as the dimension of the largest subspace of $\Omega$, where the second derivative $d^2 \F(q)$ is negative definite. 
\end{dfn}

The above definition suits the study of the $N$-body problem, as $H^1([T_1, T_2], \hat{\X})$ is an open and dense subspace of $H^1([T_1, T_2], \X)$ with the action functional $\A$ being $C^2$ in $H^1([T_1, T_2], \hat{\X})$ and lower semi-continuous in $H^1([T_1, T_2], \X)$. Since $\A$ is generally not differentiable at a collision path, such a path can not be  a \emph{critical point} and moreover it does not have a well-defined Morse index for a collision path. Although a collision path can still be \emph{a local minimizer}. 

To deal with the above problem, following ideas from \cite{BR89}, \cite{Tn93a} and \cite{ABT06}, let's perturb the weak force $N$-body problem \eqref{eq: N body} by a strong force potential
\begin{equation}
\label{eq; V} \ep \U (q) := \ep \sum_{ \{ i < j \} \subset \N} \frac{m_i m_j}{|q_i -q_j|^2}, \; \text{ for } \; \ep > 0 \text{ small enough}. 
\end{equation}
Then the motion of masses satisfies
\begin{equation}
  \label{eq: N body strong} m_i \ddot{q}_i = - \al \sum_{ j \in \N \setminus \{i\}}  \frac{ m_i m_j(q_i -q_j)}{|q_i-q_j|^{\al+2}} - 2 \ep \sum_{ j \in \N \setminus \{i\}}  \frac{ m_i m_j(q_i -q_j)}{|q_i-q_j|^4} , \; \forall i \in \N,
  \end{equation}  
which is the Euler-Lagrange equation of the action functional 
\begin{equation}
\label{eq: action fun strong perturb} \aep(q; T_1, T_2) : = \int_{T_1}^{T_2} L^{\ep}(q, \qd) \,dt, \; \text{ where } \; L^{\ep}(q, \qd):= L(q, \qd)+ \ep \U(q).
\end{equation} 
Furthermore we set $\A^{0}(q; T_1, T_2) = \A(q; T_1, T_2)$. When $\ep>0$, any path with a finite action value of $\aep$ must be collision-free. 

Given an arbitrary path $q \in H^1([T_1, T_2], \X)$, set 
$$ H^1(q): = \{ \qt \in H^1([T_1, T_2], \X): \; \qt(T_i) = q(T_i), i = 1, 2 \}; $$
$$ \hat{H}^1(q): = H^1(q) \cap H^1([T_1, T_2], \hat{\X} \}. $$
Then $\hat{H}^1(q)$ is an open and dense subset of $H^1(q)$, where $\aep$ is lower semi-continuous in $H^1(q)$ and $C^2$ in  $\hat{H}^1(q)$. Hence if $q$ is collision-free and $d \A^{\ep}(q)=0$, then it is a critical point of $\A^{\ep}$, and we will denote its Morse index in $\hat{H}^1(q)$ by $m^-_{T_1, T_2}(q, \aep)$. Now we introduce a notion called \emph{weak critical points} as a generalization of the usual critical points. 

\begin{dfn}
\label{dfn: weak critical pt Morse index} We say a path $q \in H^1([T_1, T_2], \X)$ (which may contain collision) with finite action value, $\A(q; T_1, T_2) < \infty$, is a \textbf{weak critical point} of $\A$, if there exists a sequence of positive numbers $\en \to 0$ and a sequence of $\qn \in H^1([T_1, T_2], \X)$, such that
\begin{enumerate}
\item[(i).] $\aen(\qn; T_1, T_2) < C$, $\forall n $, for some finite constant $C$;
\item[(ii).] $\qn$ is a critical point of $\aen$, for any $n$; 
\item[(iii).] $\qn \to q$ weakly in $H^1$-norm and strongly in $L^{\infty}$-norm.
\end{enumerate}

$c=\A(q; T_1, T_2)$ will be called a \textbf{weak critical value} of $\A$, and the \emph{Morse index} of such a weak critical point $q$ in $H^1(q)$ (with respect to $\A$) will be defined as
\begin{equation}
\label{eq: morse index A} \mo_{T_1, T_2}(q, \A) = \inf \liminf_{n \to \infty} \mo_{T_1, T_2}(\qn, \aen),
\end{equation}
where the infimum is taken over all sequences $\en$ and $\qn$ satisfying the above conditions.  
\end{dfn}
\begin{rem}
A similar notation was introduced in \cite{ABT06}, where it was called \textbf{generalized critical point}. 
\end{rem}


With the above definition, we have the following result, which can be seen as a partial generalization of Theorem \ref{thm: fixed ends minimizer}. 
\begin{thm}
\label{thm: fix end not minimizer} When $d \ge 3$, given a weak critical point $q \in H^1([T_1, T_2], \X)$ of $\A$, let $\mathcal{B}(q)$ represent the number of binary collisions occurring in $q(t)$, $t \in (T_1, T_2)$ (when there are more than one binary collision at a given moment, each of them should be counted separately), then
\begin{equation}
  \label{eq; bound number of binary coll} (d-2)i(\al)\mathcal{B}(q) \le \mo_{T_1, T_2}(q, \A),
 \end{equation}
where 
\begin{equation}
\label{eq: i al} i(\al) = \max\{k \in \zz: \; k < \frac{2}{2-\al} \}.
\end{equation}
In particular $q(t), t \in (T_1, T_2)$, is free of binary collision, i.e. $\mathcal{B}(q)=0$, if
$$ m^-_{T_1, T_2}(q, \A) < (d-2) i(\al). $$
\end{thm}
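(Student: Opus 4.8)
The plan is to reduce the global statement to a local analysis near each binary collision and then to apply a Morse-index lower bound for the approximating solutions $\qn$ of $\aen$ that blows up as $\en \to 0$. More precisely, suppose $q$ has $\mathcal B(q) = B$ binary collisions, occurring at moments $t_1, \dots, t_m \in (T_1, T_2)$ with multiplicities $b_1, \dots, b_m$ (so $\sum b_k = B$), where a ``multiplicity'' counts distinct colliding pairs at the same instant. Fix disjoint closed intervals $I_k \subset (T_1, T_2)$ around each $t_k$, small enough that on $I_k$ the path $q$ has exactly the prescribed binary collisions and no other collisions, and such that the colliding clusters stay uniformly separated from the other masses. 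Since $\qn \to q$ in $L^\infty$, for $n$ large the same separation holds along $\qn$, so on $I_k$ the dynamics of $\qn$ decouples (up to a uniformly smooth, hence Morse-index-irrelevant, interaction term) into $b_k$ essentially independent two-body subproblems with an $\en$-strong-force perturbation, each of which exhibits a genuine collision of $q$ being regularized by $\en \U$.

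The core step is a local Morse-index estimate: for a single regularized two-body collision, I would show that the second variation $d^2 \aen(\qn)$ restricted to variations supported in $I_k$ and concentrated near the collision has a negative subspace of dimension at least $(d-2)\, i(\al)$, uniformly for $n$ large. This is where Tanaka's idea enters. The point is that near a collision the $\en \U$ term forces $\qn$ to make a near-collision excursion whose ``local degree'' in the $(d-1)$-dimensional sphere of collision directions is nontrivial; one constructs explicit test variations by composing the near-collision arc with spherical harmonics / rotations in the $\rr^{d-1}$ of directions transverse to the line of approach, of which there are $d-1$ independent rotational directions, and along each of these the Jacobi-type quadratic form $\int (|\dot\xi|^2 - $ (curvature term)$\,)\,dt$ is negative on a space of dimension $i(\al)$ — the number $i(\al)$ being exactly the count of radial ``nodes'' one can fit, governed by the homogeneity degree $-\al$ of the potential through the inequality $k < 2/(2-\al)$. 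The cleanest way to make the test functions and the negativity estimate uniform in $n$ is to rescale near each collision (blow-up / McGehee-type coordinates) so that the $\en$-perturbed collision arc converges, after rescaling, to a fixed model arc of the pure two-body problem with collision, and to produce the negative variations at the level of the model and pull them back; the $(d-2)$ rather than $(d-1)$ appears because one rotational direction is consumed by the conservation of angular momentum of the colliding pair (the variation tangent to the collision orbit contributes a zero, not a negative, mode), leaving $d-2$ genuinely negative rotational directions, each carrying $i(\al)$ radial modes.

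Having the local estimate, I would assemble: choose for each $k$ a subspace $V_k \subset \hat H^1(\qn)$ of dimension $b_k (d-2) i(\al)$, built from variations supported in $I_k$, on which $d^2\aen(\qn)$ is negative definite for $n$ large; since the $I_k$ are pairwise disjoint, the $V_k$ are linearly independent and their sum $V = \bigoplus_k V_k$ has dimension $B(d-2)i(\al)$, and $d^2 \aen(\qn)$ is negative definite on $V$ because the quadratic form splits as a direct sum over disjoint time-supports (the small smooth cross-cluster interaction being absorbable for $n$ large without destroying definiteness, e.g. by shrinking the $I_k$). Hence $\mo_{T_1, T_2}(\qn, \aen) \ge B(d-2) i(\al)$ for all large $n$, and passing to the liminf and then the infimum over admissible sequences in the definition \eqref{eq: morse index A} gives $(d-2) i(\al)\, \mathcal B(q) \le \mo_{T_1, T_2}(q, \A)$. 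The contrapositive is the final assertion: if the Morse index is strictly below $(d-2) i(\al)$, then $B = 0$, i.e. $q$ is free of binary collision.

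The main obstacle, and where most of the real work lies, is the uniform-in-$n$ local Morse-index bound near a single collision: one must control the regularized solution $\qn$ on the shrinking collision window well enough (asymptotic estimates on $|\qn_i - \qn_j|$, on the direction $(\qn_i - \qn_j)/|\qn_i - \qn_j|$, and on the time spent near collision) to know that the blow-up limit is a genuine collision–ejection arc of the two-body problem and that the negative test variations constructed at the limit survive the pullback with constants independent of $n$; handling the case $b_k \ge 2$ (several simultaneous binary collisions) requires this uniformity to hold for each colliding pair at once, which is fine since the pairs are mutually separated, but the bookkeeping of ``each counted separately'' must be respected carefully.
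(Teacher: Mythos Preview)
Your overall strategy matches the paper's: localize around each (isolated) binary collision on disjoint intervals, blow up the approximants $\qn$ near the near-collision, identify a limiting model orbit, construct $(d-2)i(\al)$ independent negative variations in directions orthogonal to the plane of that orbit (Tanaka's argument), pull them back, and sum over the disjoint supports.

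There is, however, one genuine technical point you have not anticipated and which the paper handles explicitly. The blow-up limit is \emph{not} a collision--ejection arc of the unperturbed two-body problem. Rescaling space by the minimum distance $\dn = \min_t |\qn_2 - \qn_1|$ (and time accordingly), the rescaled relative position $\xin_1$ satisfies $|\xin_1(0)| = 1$ and converges on compacts to a smooth, \emph{collision-free}, zero-energy solution $\xi_1$ of
\[
\frac{1}{m_1+m_2}\,\xi_1'' \;=\; -\al\,\frac{\xi_1}{|\xi_1|^{\al+2}} \;-\; 2\lmd\,\frac{\xi_1}{|\xi_1|^{4}},
\qquad \lmd := \lim_{n\to\infty}\frac{\en}{\dn^{\,2-\al}},
\]
where $\lmd$ records the competition between the vanishing regularizer $\en$ and the vanishing closest-approach distance $\dn$. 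There is no a priori reason for $\lmd$ to be zero, or even finite, and the argument must bifurcate: if $\lmd<\infty$, the planar zero-energy orbit $\xi_1$ has total angular sweep $2\pi\sqrt{1+\lmd}/(2-\al)$, and Tanaka's Sturm-comparison count yields a perpendicular negative subspace of dimension $(d-2)\,i(\al,\lmd)\ge (d-2)\,i(\al)$; if $\lmd=\infty$, a \emph{different} rescaling (time scaled by $\en^{-1/2}\dn^{2}$) is required, the limit solves the pure strong-force problem, and the perpendicular Morse index is infinite. Without this dichotomy the blow-up does not converge to a well-defined model, so your ``produce variations at the model level and pull back'' step cannot be executed as written. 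A minor correction: the factor $(d-2)$ arises simply because the limit orbit lies in a $2$-plane and one works in its orthogonal complement $W^{\perp}(\xi_1)\cong\rr^{d-2}$; it is not that one rotational direction is ``consumed by angular momentum''.
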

\begin{rem}
\label{rem; i al} Notice that by \eqref{eq: i al}, $i(\al)=1$, if $\al \in (0, 1]$ and $i(\al) \ge 2$, if $\al \in (1, 2)$. Moreover $i(\al)$ goes to infinity, as $\al$ goes to $2$. 
\end{rem}

\begin{rem}
It seems the above result is the best we can get based on Tanaka's idea. In particular, we are unable to obtain any nontrivial result, when $d=2$. For an explanation see Remark \ref{rem d=2 no result}. 
\end{rem}

The idea of using Morse index to rule out collision should work even when a collision cluster has more than two masses, although the technical difficulty seems very challenging. This is because when two masses approach to a binary collision, they behaves more and more like the two body problem, where the solutions are well understood and their Morse indices are relatively easy to compute. However when the collision cluster has more than two masses, as they approach to collision, the dynamics is much more complicate (see \cite{Mg74}) and the computation of Morse indices of the relevant solutions is also much more difficult. Despite of this, some progresses have been made recently in \cite{BS08}, \cite{BHPT17} and \cite{HY17}.

Theorem \ref{thm: fix end not minimizer} has the following obvious corollaries.
\begin{cor} \label{cor: index 0}
When $\mo_{T_1, T_2}(q, \A) =0$ and $d \ge 3$, $q(t)$, $t \in (T_1, T_2),$ is free of binary collision.
\end{cor}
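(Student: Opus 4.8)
The plan is to obtain this as an immediate consequence of Theorem \ref{thm: fix end not minimizer}. Since $q \in H^1([T_1, T_2], \X)$ is a weak critical point of $\A$ and $d \ge 3$, that theorem applies and yields the inequality \eqref{eq; bound number of binary coll}, namely $(d-2) i(\al) \mathcal{B}(q) \le \mo_{T_1, T_2}(q, \A)$.

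The only point to verify is that the coefficient $(d-2) i(\al)$ on the left is strictly positive. From $d \ge 3$ we get $d - 2 \ge 1$. Since $\al \in (0, 2)$ we have $2 - \al < 2$, hence $\frac{2}{2-\al} > 1$, so $k = 1$ is an integer strictly less than $\frac{2}{2-\al}$; by the definition \eqref{eq: i al} this gives $i(\al) \ge 1$, as also recorded in Remark \ref{rem; i al}. Therefore $(d-2) i(\al) \ge 1 > 0$.

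Substituting the hypothesis $\mo_{T_1, T_2}(q, \A) = 0$ into \eqref{eq; bound number of binary coll} then gives $(d-2) i(\al) \mathcal{B}(q) \le 0$. As $\mathcal{B}(q)$ is by definition a nonnegative integer, counting the binary collisions occurring in $q(t)$ for $t \in (T_1, T_2)$, and the coefficient in front of it is positive, this forces $\mathcal{B}(q) = 0$; that is, $q(t)$ is free of binary collision on $(T_1, T_2)$. There is no genuine obstacle in this deduction: all of the substantive work lies in Theorem \ref{thm: fix end not minimizer} itself, and the corollary merely specializes the final sentence of that theorem (which treats the case $\mo_{T_1, T_2}(q, \A) < (d-2) i(\al)$) to the situation $\mo_{T_1, T_2}(q, \A) = 0$.
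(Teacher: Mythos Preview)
Your proof is correct and is exactly the paper's approach: the paper simply states this as an ``obvious corollary'' of Theorem~\ref{thm: fix end not minimizer} without further argument, and your deduction spells out precisely that obvious step.
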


\begin{cor} \label{cor: index 1}
When $\mo_{T_1, T_2}(q, \A) = 1$, the following results hold.
\begin{enumerate}
\item[(a).] If $d \ge 4$ and $\al \in (0, 2)$, then $q(t)$, $t \in (T_1, T_2)$ is free of binary collision.
\item[(b).] If $d = 3$ and $\al \in (1, 2)$, then $q(t)$, $t \in (T_1, T_2)$ is free of binary collision.
\item[(c).] If $d = 3$ and $\al \in (0, 1]$, then $q(t)$, $t \in (T_1, T_2)$ has at most one binary collision, i.e. $\mathcal{B}(q) \le 1$.
\end{enumerate}
\end{cor}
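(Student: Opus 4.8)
We prove Theorem \ref{thm: fix end not minimizer}; Corollaries \ref{cor: index 0} and \ref{cor: index 1} then follow at once by substituting the relevant values of $d$ and $\al$ and invoking Remark \ref{rem; i al}. By \eqref{eq: morse index A}, $\mo_{T_1, T_2}(q, \A)$ is the infimum of $\liminf_{n}\mo_{T_1, T_2}(\qn, \aen)$ over all admissible sequences, so it suffices to fix one sequence $\en \to 0$, $\qn$ as in Definition \ref{dfn: weak critical pt Morse index}, fix an arbitrary finite subcollection of $B$ binary collisions of $q$ in $(T_1, T_2)$ (if $\mathcal{B}(q) = \infty$ we let $B \to \infty$ at the end), and prove $\liminf_{n}\mo_{T_1, T_2}(\qn, \aen) \ge (d-2)\,i(\al)\,B$. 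Label these collisions by a pair $\{i_k, j_k\} \subset \N$ and an instant $t_k \in (T_1, T_2)$, $k = 1, \dots, B$, where simultaneous collisions involve disjoint pairs. The plan is to produce, for every large $n$, a subspace of $\hat{H}^1(\qn)$ of dimension $(d-2)\,i(\al)\,B$ on which $d^2\aen(\qn)$ is negative definite, assembled as a direct sum of $B$ local pieces: the $k$-th piece is supported in a small interval $I_k \ni t_k$ and moves only the masses $i_k, j_k$, with the centre of mass frozen, so that the relative coordinate $x := q_{i_k} - q_{j_k}$ is varied freely in $\rr^d$. When the $t_k$ are distinct we take the $I_k$ pairwise disjoint; when two coincide the corresponding pieces act on disjoint coordinates. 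In both situations the cross terms of $d^2\aen(\qn)$ between distinct pieces stay bounded in $n$, whereas each diagonal block will be negative with arbitrarily large norm, so the whole sum is negative definite.

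The heart of the matter is the construction of one local piece, and here $d \ge 3$ enters: we exploit only the $d-2$ directions perpendicular to the plane of the colliding pair. Near $t_k$ set $x^n := q^n_{i_k} - q^n_{j_k}$; since $\qn \to q$ in $L^\infty$ and $q$ has a collision of this pair at $t_k$, the closest approach $r_n := \min_{I_k}|x^n|$ tends to $0$, while all other mutual distances stay bounded below. Using that $\qn$ solves \eqref{eq: N body strong}, together with the standard asymptotic (blow-up) analysis of a near-collision — rescaling time by $r_n^{(\al+2)/2}$ and space by $r_n$ about the instant of closest approach — the rescaled relative motion $\rho_n(\sg) := |x^n|/r_n$ converges, along a subsequence, to the radial part $\rho_\infty$ of a \emph{parabolic} collision--ejection solution of the limiting two-body problem, suitably normalised so that $\rho_\infty(0) = 1 = \min\rho_\infty$ and $\rho_\infty(\sg) \sim a\,|\sg|^{2/(\al+2)}$ as $\sg \to \pm\infty$. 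This limiting orbit has nonzero angular momentum, hence its orbit plane is a genuine $2$-plane $P^n \subset \rr^d$; fix an orthonormal basis $u^n_1, \dots, u^n_{d-2}$ of its orthogonal complement.

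Because the pairwise interaction potentials $|q_i-q_j|^{-\al}$ and $|q_i-q_j|^{-2}$ are rotationally invariant, their Hessians restricted to directions perpendicular to $x^n$ are scalar; hence $d^2\aen(\qn)$, restricted to perturbations $\dl x = w(t)\,u^n_l$ with $w \in H^1_0(I_k, \rr)$ (split between $q_{i_k}$ and $q_{j_k}$), block-diagonalises over $l$, the $l$-th block being, up to a positive multiple depending only on $m_{i_k}, m_{j_k}$ and to leading order in $n$,
$$ \int_{I_k} \Big( |\dot w|^2 - \big( \al\,|x^n|^{-\al-2} + 2\en\,|x^n|^{-4} \big)\,|w|^2 \Big)\, dt . $$
In the relevant regime the strong-force term is of lower order, and the above rescales to $r_n^{-(\al+2)}$ times $\int \big( |w'|^2 - \al\,\rho_\infty^{-\al-2}\,|w|^2 \big)\, d\sg + o(1)$. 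The potential $\al\,\rho_\infty^{-\al-2}$ is a smooth well near $\sg = 0$ decaying like $\tfrac{2\al}{(\al+2)^2}\,\sg^{-2}$ as $|\sg| \to \infty$, with coefficient strictly less than $\tfrac14$ because $\al < 2$; consequently the associated one-dimensional Schr\"odinger operator on the line has finitely many negative eigenvalues, and the key computation — carried out on the explicit profile $\rho_\infty$ — is that their number is exactly $i(\al) = \max\{k \in \zz : k < \tfrac{2}{2-\al}\}$. Its eigenfunctions decay exponentially, so their rescaled copies are supported in an $O(r_n^{(\al+2)/2})$-neighbourhood of $t_k$, inside $I_k$, and make the $l$-th block negative definite of dimension at least $i(\al)$ for all large $n$, with most negative eigenvalue of order $-r_n^{-(\al+2)} \to -\infty$ — the large negativity promised above. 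Summing over $l = 1, \dots, d-2$ and over $k = 1, \dots, B$ yields a negative-definite subspace of dimension $(d-2)\,i(\al)\,B$, so $\mo_{T_1, T_2}(\qn, \aen) \ge (d-2)\,i(\al)\,B$ for all large $n$; letting $B \uparrow \mathcal{B}(q)$ completes the argument.

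The main obstacle is the blow-up/compactness step producing the profile $\rho_\infty$. Two things must be controlled uniformly in $n$: first, that near the closest approach the pair $(i_k, j_k)$ genuinely decouples from the rest of the configuration and from the strong-force perturbation — i.e.\ that the regime in which $\en\,|x^n|^{-2}$ is comparable to or larger than $|x^n|^{-\al}$ either does not occur (a perturbation that strong should by itself keep $r_n$ bounded away from $0$, contradicting $\qn \to q$) or still produces at least $i(\al)$ negative directions, since an extra attractive term only deepens the well; and second, that the limiting orbit is parabolic ($E = 0$ after rescaling), which is what pins down the $\sg^{-2}$ tail with the precise coefficient $\tfrac{2\al}{(\al+2)^2}$ and hence the count $i(\al)$ — this should follow from a Wintner-type asymptotic estimate for $\qn$ near the near-collision together with the scaling relation $E_{\text{rescaled}} = E_n\, r_n^{\al} \to 0$. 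The remaining ingredients are comparatively routine: the block-diagonalisation over perpendicular directions is rotational symmetry of the potential, the boundedness of the cross terms between the $B$ pieces is a direct estimate, and the $i(\al)$-count for the model operator is an ODE computation on $\rho_\infty$. Note that in-plane perturbations are deliberately not used, so the bound \eqref{eq; bound number of binary coll} need not be sharp; this is also why nothing is obtained when $d = 2$.
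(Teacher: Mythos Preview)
Your outline is essentially the paper's own route: the corollary is immediate from Theorem~\ref{thm: fix end not minimizer}, and that theorem is proved exactly by blowing up $\qn$ near each near-collision, showing the rescaled relative coordinate converges to a zero-energy (parabolic) solution of a Kepler-type problem, and extracting at least $i(\al)$ negative directions of $d^2\aen(\qn)$ in each of the $d-2$ directions orthogonal to the limiting orbit plane via Tanaka's Sturm-comparison count. The assembly over the $B$ collisions and the remark that in-plane directions are not used (hence nothing for $d=2$) also match the paper.

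Where the paper is sharper is precisely at the two obstacles you flag. First, the strong-force term is \emph{not} in general lower order after rescaling: the paper sets $\lmd=\lim_n \en/\dn^{2-\al}$ (with $\dn$ the closest approach) and splits cases. If $\lmd\in[0,\infty)$ the limiting equation is \eqref{eq: xih} with the extra $-2\lmd\xih_1/|\xih_1|^4$ term, and the index count (Lemma~\ref{lm: lim xin}(c)) gives $i(\al,\lmd)\ge i(\al)$, which is the rigorous version of your ``deeper well'' heuristic. If $\lmd=\infty$, a \emph{different} rescaling (time scaled by $\en^{-1/2}\dn^{2}$, see \eqref{eq: blow up ztn}) produces a pure strong-force limit with infinite Morse index (Lemma~\ref{lm: lim zt}); your alternative suggestion that large $\lmd$ should keep $\dn$ bounded away from $0$ is not how this case is handled. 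Second, the parabolicity of the limit requires a uniform bound on the \emph{cluster} energy $E^{\en}_{\{i_k,j_k\}}$ near $t_k$; the paper secures this by first proving that a weak critical point is a generalized solution in the sense of Definition~\ref{dfn: generalized solution} (Proposition~\ref{prop: weak solution to generalized}), whose condition~(iv) yields the needed $H^1$-bound on the cluster energy and hence $E_{\text{rescaled}}=O(\dn^{\al})\to 0$ exactly as you anticipate. That same framework also gives the isolation of each binary collision (Proposition~\ref{prop: isolated binary coll}), which underpins your choice of disjoint intervals $I_k$.
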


Notice that in Corollary \ref{cor: index 1}, when $d=3$ and $\al \in (0, 1]$, the weak critical point may very well contains a binary collision and in this case we have the following result.

\begin{thm}
\label{thm: regularization} When $d=3$ and $\al=1$, let $q \in H^1([T_1, T_2], \X)$ be a weak critical point of $\A$ with $m^-_{T_1, T_2}(q, \A)=1$, if there is a binary collision between $m_{i_1}$ and $m_{i_2}$ at a moment $t_0 \in (T_1, T_2)$, then both limits 
$\lim_{t \to t_0^{\pm}} \frac{q_{i_2}(t)-q_{i_1}(t)}{|q_{i_2}(t)-q_{i_1}(t)|}$ exist and equal to each other. 
\end{thm}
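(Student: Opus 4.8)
The plan is to sharpen, at the colliding pair, the Morse--index argument underlying Theorem \ref{thm: fix end not minimizer}. Recall first that a weak critical point is a generalized solution of \eqref{eq: N body} (see Definition \ref{dfn: generalized solution}), so $q$ solves \eqref{eq: N body} classically on punctured neighbourhoods of $t_0$; moreover by Theorem \ref{thm: fix end not minimizer} one has $\mathcal{B}(q)\le\mo_{T_1, T_2}(q,\A)=1$, so the collision of $m_{i_1}$ and $m_{i_2}$ at $t_0$ is the only, hence an isolated, binary collision. Writing $w(t):=q_{i_2}(t)-q_{i_1}(t)$, the classical binary--collision asymptotics then give $|w(t)|\sim\gm\,|t-t_0|^{2/3}$ and the existence of $s^{\pm}:=\lim_{t\to t_0^{\pm}}w(t)/|w(t)|$. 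This disposes of the first assertion; the content is $s^{+}=s^{-}$.

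To reach it I would blow up the approximating solutions at the collision. Fix $\en\to 0$, $\qn$ critical for $\aen$, $\qn\to q$ realizing $\mo_{T_1, T_2}(q,\A)=1$, so that (along a subsequence) $\mo_{T_1, T_2}(\qn,\aen)\le 1$, and set $w^n:=q^n_{i_2}-q^n_{i_1}$ and $\rho_n:=\min_{|t-t_0|\le\dl}|w^n(t)|$; since the strong force $\en\U$ prevents an actual collision, $\rho_n\to 0$ but $\rho_n>0$. Rescaling on the homothetic time scale, $\bar w^n(\tau):=w^n(t_0+\rho_n^{3/2}\tau)/\rho_n$, I would prove that along a further subsequence $\bar w^n\to\bar w$ uniformly on compact $\tau$--intervals, where $\bar w$ solves the Kepler two--body problem for $m_{i_1},m_{i_2}$ --- the remaining $N-2$ masses contribute a bounded force that disappears under the rescaling, and in the relevant regime of $\rho_n$ so does the strong--force term. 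Matching this rescaling with the homothetic behaviour of $w$ near $t_0$ forces $\bar w$ to have zero energy, i.e. to be a parabolic orbit, with limiting directions $s^{\mp}$ at $\tau\to\mp\infty$. Since the two ends of a parabolic Keplerian orbit run off to infinity in a common direction, this gives $s^{+}=s^{-}$ --- provided the blow--up really is a single parabolic orbit, and is not ``richer'' (a hyperbolic orbit, or one in which the strong--force term survives the rescaling, whose two ends would then point in different directions). Ruling that out is the point at which the hypotheses enter.

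To exclude the richer case I would go back to the second variation. In the proof of Theorem \ref{thm: fix end not minimizer} one constructs, localized near the near--collision of $\qn$, a $(d-2)\,i(\al)$--dimensional subspace on which $d^2\aen(\qn)$ is negative definite, assembled from variations whose spatial profile lies in the $(d-2)$--plane normal to the plane of the blown--up orbit and whose $i(\al)$ time profiles are prescribed. A blown--up orbit that turns more than the parabolic one furnishes one extra negative direction: a variation supported near $t_0$, lying in the $2$--plane of $\bar w$ and transverse to $\bar w$ itself, which lowers the action by partially straightening the orbit --- and for $\al=1$ the one--dimensional spectral (Hardy--type) problem governing this direction is strictly subcritical, so the direction is genuinely negative rather than merely non--positive. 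Hence in the excluded case $d^2\aen(\qn)$ would have $(d-1)\,i(\al)=2$ independent negative directions for $n$ large, giving $\mo_{T_1, T_2}(\qn,\aen)\ge 2$ --- a contradiction. (For $d\ge 4$ the statement is empty by Corollary \ref{cor: index 1}(a), for $\al\in(1,2)$ by Corollary \ref{cor: index 1}(b), and the methods give nothing for $d=2$; for $\al\in(0,1)$ even the zero--energy orbit turns by $2\pi/(2-\al)\ne 2\pi$, so its two ends point in different directions. Thus $d=3$, $\al=1$, $\mo_{T_1, T_2}(q,\A)=1$ is exactly the case in which the argument closes.)

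The main obstacle is this last step: producing the extra negative direction and, above all, showing that for $n$ large it is linearly independent of the $(d-2)\,i(\al)$ directions already supplied by the proof of Theorem \ref{thm: fix end not minimizer}, uniformly in $n$. This requires a precise enough description of the blown--up orbit --- its turning plane, its asymptotic rays, and the decay rates of the perturbation and of the influence of the other masses --- to keep the two families of test functions transverse throughout the near--collision; it is also what forces one to work under the sharp hypotheses $d=3$, $\al=1$, $\mo_{T_1, T_2}(q,\A)=1$.
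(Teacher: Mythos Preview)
Your overall blow-up strategy matches the paper's, but the mechanism you propose for ruling out the ``richer'' limit is wrong. In the paper's setup the blow-up limit is always a \emph{zero-energy} orbit (the cluster energy $E_{\{i_1,i_2\}}$ is continuous and rescales to zero, Lemma~\ref{lm: lim xin}(a)), so hyperbolic limits never occur; what can vary is only the parameter $\lmd=\lim_n \en/\dn^{2-\al}\in[0,\infty]$ left over from the strong-force perturbation, and the limit $\xi_1$ solves $\xi''/(m_{i_1}+m_{i_2})=-\al\,\xi/|\xi|^{\al+2}-2\lmd\,\xi/|\xi|^4$. Its two asymptotic rays make an angle $2\pi\sqrt{1+\lmd}/(2-\al)$ (Lemma~\ref{lm: lim xin}(b)), so for $\al=1$ the conclusion $u^+=u^-$ is exactly the statement $\lmd=0$.

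Your proposed extra negative direction \emph{in the plane} $W(\xi_1)$ of the blown-up orbit does not exist: the paper explicitly records (Remark~\ref{rem d=2 no result}, citing \cite[Corollary 5.1]{HY17}) that the Morse index of $\xi_1$ restricted to $W(\xi_1)$ is zero, regardless of $\lmd$, so the ``partial straightening'' variation you describe is not strictly negative. The correct extra direction lies again in the orthogonal complement $W^\perp(\xi_1)$: the count there is $i(\al,\lmd)=\max\{k\in\zz:k<2\sqrt{1+\lmd}/(2-\al)\}$ per orthogonal dimension (Lemma~\ref{lm: lim xin}(c), via Tanaka's Sturm argument tied to the winding of $\xi_1$), and for $\al=1$ this jumps from $1$ to $\ge 2$ the moment $\lmd>0$. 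With $d=3$ Proposition~\ref{prop: one binary coll}(a) then gives Morse index $\ge(d-2)\,i(1,\lmd)\ge 2$, contradicting $m^-_{T_1,T_2}(q,\A)=1$; hence $\lmd=0$, and Lemma~\ref{lem: asymp angle} transfers $u^+=u^-$ from $\xi_1$ back to $q$. So keep your blow-up, but replace the in-plane test variation by the sharper orthogonal bound $i(\al,\lmd)$ in place of $i(\al)$.
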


\begin{rem}
The above result is interesting, because it is well-known if a solution of the spatial $N$-body problem has a single binary collision at a moment (no other partial collision exists at the same moment), then it can be regularized by Kustaanheimo-Stiefel regularization \cite{KS65}. With the result from the above theorem, under the assumption that there is no other partial collisions, one can show the generalize solution corresponding to the weak critical point is actually a classical solution in the regularized system. 
\end{rem}

\begin{rem}
Results similar to Theorem \ref{thm: regularization} can be obtained for potentials with $\al \ne 1$, see Lemma \ref{lem: asymp angle}. However the problem of regularizing a binary collision is more complicate for non-Newtonian potentials, see \cite{MG81}. 
\end{rem}

Since the Morse index of a critical point obtained by the mountain pass theorem must be less than or equal to one (see \cite{Hf85}), we believe Corollary \ref{cor: index 0} and \ref{cor: index 1} and Theorem \ref{thm: regularization} could be useful, when mountain pass methods are used in the study the $N$-body problem. This shall be discussed in a forthcoming paper.  

Our paper is organized as follows: in Section \ref{sec: generalized} we show a weak critical point is a generalized solution, in Section \ref{sec: Tanaka} the proofs of the main results will be given, and in Section \ref{sec: lem 1} and \ref{sec: lm asymp angle} we give the proofs of some technical lemmas.





\section{Generalized solutions} \label{sec: generalized} 
Consider the perturbed $N$-body problem \eqref{eq: N body strong}, for any subset of indices $\I \subset \N$, we define the \emph{Lagrangian} and \emph{energy} of the $\I$-cluster as 
$$ L^{\ep}_{\I}(q, \qd): = K_{\I}(\qd)+U_{\I}(q)+\ep \U_{\I}(q),$$
$$ E^{\ep}_{\I}(q, \qd):= K_{\I}(\qd)-U_{\I}(q)-\ep \U_{\I}(q), $$
where
$$ K_{\I}(\qd):= \ey \sum_{i \in \I} m_i|\qd_i|^2, \;\; U_{\I}(q): = \sum_{\{i<j\}\subset \I}\frac{m_i m_j}{|q_i-q_j|^{\al}}, \; \; \U_{\I}(q): = \sum_{\{i < j \} \subset \I} \frac{m_i m_j}{|q_i-q_j|^2}. $$

Let $\I':= \N \setminus \I$ denote the complement of $\I$ in $\N$, then 
$$ L^{\ep}(q, \qd) = L^{\ep}_{\I}(q, \qd) + L^{\ep}_{\I'}(q, \qd) + U_{\I, \I'}(q) +\ep \U_{\I, \I'}(q), $$
$$ E^{\ep}(q, \qd) = E^{\ep}_{\I}(q, \qd) + E^{\ep}_{\I'}(q, \qd) - U_{\I, \I'}(q) - \ep \U_{\I, \I'}(q), $$
where 
$$ U_{\I, \I'}(q): = \sum_{i \in \I, j \in \I'} \frac{m_im_j}{|q_i -q_j|^{\al}}, \; \; \U_{\I, \I'}(q): = \sum_{i \in \I, j \in \I'} \frac{m_im_j}{|q_i -q_j|^{2}}. $$

\begin{dfn} \label{dfn: generalized solution} $q \in H^1([T_1, T_2], \X)$ is a \textbf{generalized solution} of \eqref{eq: N body}, if it satisfies the following conditions:
\begin{enumerate}
\item[(i).] $\Delta^{-1}(q): = \{t \in [T_1, T_2]: \; q(t) \in \Delta\}$ has measure $0$ in $[T_1, T_2]$; 
\item [(ii).] $q \in C^2$ on $[T_1, T_2] \setminus \Delta^{-1}(q)$ and satisfies \eqref{eq: N body};
\item [(iii).] the total energy of $q(t)$, $E(t)=E(q(t), \qd(t))$,  is a constant, for all  $t \in [T_1, T_2] \setminus \Delta^{-1}(q)$;
\item [(iv).] for any subset $\I \subset \N$ and sub-interval $(t_1, t_2) \subset [T_1, T_2]$, if 
\begin{equation}
\label{eq: I cluster collision} q_i(t) \ne q_j(t), \;\; \forall i \in \I, \forall j \in \I', \text{ and } \forall t \in (t_1, t_2),
\end{equation}
then $E_{\I}(t)= E_{\I}(q(t), \qd(t)) \in H^1((t_1, t_2), \rr)$. In particular, $E_{\I}(t)$ is continuous in $(t_1, t_2)$. 
\end{enumerate}
\end{dfn}

\begin{rem}
Condition (iv) in the above definition shows the energy of a $\I$-cluster is continuous, as long as the masses from the $\I$-cluster do not collide with masses outside of the cluster, even when there are collisions among the masses inside the cluster. This condition was not required by in the original definition of a generalized solution introduced by Bahri and Rabinowitz, see \cite{BR91} and \cite{AZ93}. Our definition here is stronger and follows from \cite[Definition 4.6]{FT04}.
\end{rem}

\begin{prop}
\label{prop: weak solution to generalized} A weak critical point $q \in H^1([T_1, T_2], \X)$ of $\A$ is a generalized solution of \eqref{eq: N body}. 
\end{prop}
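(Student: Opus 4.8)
The plan is to transfer the classical conservation laws from the approximating critical points $\qn$ of $\aen$ to the limit $q$ using the convergence in Definition \ref{dfn: weak critical pt Morse index}. Since each $\qn$ is collision-free and $d\aen(\qn)=0$, it is a classical solution of \eqref{eq: N body strong} on all of $[T_1,T_2]$; in particular the total energy $E^{\en}(\qn(t),\qd_n(t))$ is a constant $h_n$, and more generally, on any subinterval where the $\I$-cluster masses stay away from the $\I'$-masses, the cluster energy $E^{\en}_{\I}$ satisfies the well-known identity $\frac{d}{dt}E^{\en}_{\I} = \langle \nb_{\I} U_{\I,\I'} + \en\nb_{\I}\U_{\I,\I'}, \qd_{\I}\rangle$ plus the symmetric contribution from the $\I'$-side. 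The first task is to establish the needed compactness: from (i), the actions $\aen(\qn)$ are uniformly bounded, hence the kinetic energies are bounded in $L^1$, and combined with (iii) (weak $H^1$ convergence, $L^\infty$ convergence) one gets that $q$ has finite action, that $\qd_n \rightharpoonup \qd$ in $L^2$, and that on any compact subinterval avoiding the collision set of $q$, the $\qn$ converge in $C^2_{loc}$ to $q$ by elliptic bootstrapping of the ODE \eqref{eq: N body strong} (the $\en\U$ term vanishes in the limit since $\en\to0$ and the configurations stay in a compact collision-free set there). This gives condition (ii) of Definition \ref{dfn: generalized solution} immediately.

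Next I would handle condition (i), that $\Dl^{-1}(q)$ has measure zero. This is where the weak-force subtlety enters: a priori a finite-action path could collide on a positive-measure set. The standard argument (as in \cite{BR91}, \cite{AZ93}) is that on a collision interval the potential $U(q(t))$ would be $+\infty$ on a positive-measure set, contradicting $\A(q)<\infty$; more carefully, one uses that near a collision the relevant pairwise distance, being the limit of the collision-free $\qn$, cannot vanish on a set of positive measure without forcing $\int U_{\I,\I'} = +\infty$ or violating the uniform energy bound. I expect this to follow from a Tonelli-type lower semicontinuity argument together with the uniform bound on $h_n$: the $L^\infty$ convergence controls the mutual distances, and the finite action bound forces $\int |q_i-q_j|^{-\al}\,dt<\infty$ for each pair, which already implies each pairwise collision set has measure zero.

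For conditions (iii) and (iv) — constancy of the total energy and $H^1$-regularity (hence continuity) of each cluster energy $E_{\I}$ — the idea is to pass to the limit in the $\en$-analogues. For the total energy, $E^{\en}(\qn)\equiv h_n$; the action bound and energy give $h_n$ bounded, so up to a subsequence $h_n\to h$, and on $[T_1,T_2]\setminus\Dl^{-1}(q)$ the $C^2_{loc}$ convergence gives $E(q(t),\qd(t)) = \lim E^{\en}(\qn(t),\qd_n(t)) = h$, a constant. For the cluster energy, on a subinterval $(t_1,t_2)$ satisfying \eqref{eq: I cluster collision} one integrates the derivative identity for $E^{\en}_{\I}$: the inter-cluster distances are bounded below (uniformly in $n$, by $L^\infty$ convergence and \eqref{eq: I cluster collision}), so $\nb U_{\I,\I'}$ and $\en\nb\U_{\I,\I'}$ are uniformly bounded on $(t_1,t_2)$, while $\qd_n$ is bounded in $L^2$; hence $\frac{d}{dt}E^{\en}_{\I}$ is bounded in $L^2(t_1,t_2)$ uniformly in $n$, and $E^{\en}_{\I}$ itself is bounded in $L^2$ (using that $U_{\I}\ge 0$ and the action of the cluster is controlled). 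Therefore $E^{\en}_{\I}$ is bounded in $H^1(t_1,t_2)$, so a subsequence converges weakly in $H^1$ and uniformly; the limit must be $E_{\I}(q(t),\qd(t))$ a.e., giving $E_{\I}\in H^1(t_1,t_2)$ as required.

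The main obstacle I anticipate is the $C^2_{loc}$ (or even $C^1_{loc}$) convergence of $\qn$ to $q$ away from collisions, together with making rigorous that the $\en\U$ perturbation genuinely disappears in the limit where it matters: one must rule out that energy or "action mass" concentrates at a collision instant of $q$ in such a way that $\en\U(\qn)$ stays bounded below. Handling this cleanly requires a local version of the energy estimate near each collision time — essentially a Lagrange–Jacobi / Sundman-type inequality for the $\en$-regularized cluster — to show that the contribution of the strong-force term is $o(1)$ uniformly on compact collision-free sets. The pairwise-distance and energy bounds should make this routine but it is the technical heart of the proof.
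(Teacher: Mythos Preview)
Your approach is essentially the same as the paper's. The paper dispatches conditions (i)--(iii) of Definition~\ref{dfn: generalized solution} by citing \cite{BR91} and \cite{AZ93} and concentrates on (iv), proving it exactly along your lines: compute $\frac{d}{dt}E^{\en}_{\I}$, bound it in $L^2(t_1,t_2)$ via the uniform bound on the inter-cluster forces and the $L^2$ bound on $\dot q^n$, then bound $E^{\en}_{\I}$ itself in $L^2$ and extract a weak $H^1$ limit identified with $E_{\I}$.

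Two small remarks. First, the step you pass over quickly---``$E^{\en}_{\I}$ itself is bounded in $L^2$''---is where the paper does a little work: from $E^{\en}_{\I}\le K_{\I}$ one gets only a one-sided $L^1$ bound, and the paper combines this with the already-established $L^2$ bound on $\dot E^{\en}_{\I}$ via the Poincar\'e inequality to obtain the $L^2$ bound on $E^{\en}_{\I}$. Second, your ``main obstacle'' paragraph is misplaced: on any compact subinterval of $[T_1,T_2]\setminus\Delta^{-1}(q)$, the $L^\infty$ convergence of $\qn$ to $q$ gives a uniform lower bound on all pairwise distances for large $n$, so $\U(\qn)$ is uniformly bounded there and $\en\U(\qn)\to 0$ trivially; the $C^2_{loc}$ convergence then follows by straightforward ODE bootstrapping with no concentration issue to rule out. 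The genuine technical content is entirely in condition (iv), and there your outline matches the paper.
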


\begin{proof}
Let $\en >0$ and $\qn \in H^1(T_1, T_2, \mathcal{X})$ be two sequences satisfying the conditions given in Definition \ref{dfn: weak critical pt Morse index}. Then there is a finite constant $C>0$, such that
\begin{equation}
\ey \int_{t_1}^{t_2} \sum_{i \in \N} m_i |\dot{\qn_i}(t)|^2 \,dt \le \aen(\qn; T) \le C, \;\; \forall n. 
\end{equation}

The fact that $q$ satisfies the first three conditions given in Definition \ref{dfn: generalized solution} is a standard result, for details see \cite{BR91} or \cite{AZ93}. In the following, we will show $q$ also satisfies condition (iv). Given an arbitrary $\I \subset \N$, recall that  
$$ E^{\en}_{\I}(t)=E^{\en}_{\I}(\qn(t), \dot{\qn}(t)) = K_{\I}(\dot{\qn}(t))- U_{\I}(\qn) -\en \U_{\I}(\qn). $$
By a direct computation,
\begin{equation}
\label{eq derivative E I}
\frac{d E^{\en}_{\I}}{dt} = \sum_{i \in \I} \left \langle \frac{\partial U_{\I, \I'}(\qn)}{\partial \qn_i} + \en \frac{\partial \U_{\I, \I'}(\qn)}{\partial \qn_i}  , \dot{\qn_i} \right \rangle. \\
\end{equation}
Let's assume $q$ satisfies \eqref{eq: I cluster collision} for the above $\I$ and an arbitrary sub-interval $(t_1, t_2) \subset [T_1, T_2]$. Since $\qn(t)$ converges to $q(t)$ uniformly on $[0, T]$,
\begin{equation}
\label{eq: bound partial U V}
\left |\frac{\partial U_{\I, \I'}(\qn(t))}{\partial \qn_i} \right |, \left |\frac{\partial \U_{\I, \I'}(\qn(t))}{\partial \qn_i} \right| \le C_1, \;\; \forall t \in [t_1, t_2], \; \forall i \in \I. 
\end{equation}
Here and in the rest of the proof $C_i, i \in \zz^+$, always represents some positive constant independent of $n$. With \eqref{eq derivative E I} and \eqref{eq: bound partial U V}, the Cauchy-Schwarz inequality tells us 
\begin{equation*}
|\dot{E^{\en}_{\I}}(t)|^2 \le C_2 \sum_{i \in \I} m_i |\dot{\qn_i}(t)|^2 \le C_2 \sum_{i \in \N} m_i|\dot{\qn_i}(t)|^2, \; \; \forall t \in [t_1, t_2]. 
\end{equation*}
Then
\begin{equation}
\label{eq: dot E I L2 norm} \int_{t_1}^{t_2} |\dot{E^{\en}_{\I}}(t)|^2 \,dt \le C_2 \int_{t_1}^{t_2} \sum_{i \in \N} m_i |\dot{\qn_i}(t)|^2 \,dt \le C_3.
\end{equation}

Since $U_{\I}$, $\U_{\I}$ are always positive, $E^{\en}_{\I}(t) \le K_{\I}(\dot{\qn}(t))$, $\forall t$. Then 
\begin{equation}
\label{eq: E I L1 norm} \int_{t_1}^{t_2} E^{\en}_{\I}(t) \,dt \le  \ey \int_{t_1}^{t_2} \sum_{i \in \N} m_i |\dot{\qn_i}(t)|^2 \,dt \le C_4.
\end{equation}
Meanwhile by Poincar\'e inequality and \eqref{eq: dot E I L2 norm}, 
\begin{equation}
\label{eq: E I L2 norm} \int_{t_1}^{t_2} |E^{\en}_{\I}(t) - \int_{t_1}^{t_2} E^{\en}_{\I}(s)\,ds|^2 \,dt \le C_5 \int_{t_1}^{t_2} |\dot{E^{\en}_{\I}}|^2 \,dt \le C_6. 
\end{equation}
Then \eqref{eq: E I L1 norm} implies 
$$ \int_{t_1}^{t_2} |E^{\en}_{\I}(t)|^2 \,dt \le C_7. $$
The above inequality and \eqref{eq: dot E I L2 norm} implies $E^{\en}_{\I}(t)$ is a bounded sequence in $H^1([t_1, t_2], \rr)$. After passing to a subsequence, it converges to a $\hat{E}_{\I}(t) \in H^1([t_1, t_2], \rr)$ weakly in $H^1$ norm and strongly in $L^{\infty}$ norm. 

Since $\qd(t)$ and $E_{\I}(t)=E_{\I}(q(t), \qd(t))$ are well defined for any $t \notin \Delta^{-1}(q)$, and 
$$ \qn(t) \to q(t), \;\; \dot{\qn}(t) \to \qd(t), \; \text{ as } n \to \infty, \;\; \forall t \notin \Delta^{-1}(q),$$
we have $E^{\en}_{\I}(t) \to E_{\I}(t)$, for any $t \notin \Delta^{-1}(q)$. As a result, $E_{\I}(t)= \hat{E}_{\I}(t)$, for any $t \in [t_1, t_2] \setminus \Delta^{-1}(q)$. Since $\Delta^{-1}(q)$ is a set of measure zero, $E_{\I}(t) = \hat{E}_{\I}(t)$ as a $H^1$-Sobolev function, and it is continuous in $(t_1, t_2)$.   

\end{proof}

\begin{dfn}
\label{dfn: I cluster coll} Given a path $q \in H^1([T_1, T_2], \X)$ with an $\I$-cluster collision at a moment $t_0 \in (T_1, T_2)$, we say it is \textbf{isolated}, if there is a constant $a>0$ small enough, such that for any $i \in \I$,  
$$ q_i(t) \ne q_j(t), \; \forall t \in [t_0-a, t_0+a] \setminus \{t_0\}, \; \forall j \in \N \setminus \{i \}. $$
\end{dfn}

\begin{prop}
\label{prop: isolated binary coll} Given a weak critical point $q \in H^1([T_1, T_2], \X)$, if there is a binary collision at the moment $t_0 \in (T_1, T_2)$, then it must be isolated.
\end{prop}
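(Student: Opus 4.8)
The plan is to argue by contradiction: suppose the binary collision at $t_0$ between $m_{i_1}$ and $m_{i_2}$ is \emph{not} isolated, so that $q_i(t) = q_j(t)$ for some pair approaching $t_0$ along a sequence $t_k \to t_0$, either with $\{i,j\} \ne \{i_1,i_2\}$ or with the colliding cluster growing. In fact, since $q$ is a generalized solution by Proposition \ref{prop: weak solution to generalized}, and since $\Delta^{-1}(q)$ has measure zero, the relevant bad scenario is that there is a sequence of collision moments $t_k \to t_0$ (necessarily $t_k \ne t_0$ outside a measure-zero set, but possibly a sequence of genuine collision times) involving masses other than $\{m_{i_1}, m_{i_2}\}$, or that at $t_0$ itself a larger cluster collides. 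I would split into these cases and derive a contradiction with the finiteness of the action and the energy estimates established in Proposition \ref{prop: weak solution to generalized}.

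First I would isolate the easy sub-case: if at the moment $t_0$ the colliding cluster $\I$ has $|\I| \ge 3$ (so the ``binary collision'' is not really binary), this contradicts the hypothesis that the collision at $t_0$ is binary; so we may assume exactly $m_{i_1}, m_{i_2}$ collide at $t_0$ and the obstruction is a sequence $t_k \to t_0$, $t_k \ne t_0$, at which some other collision occurs. By continuity of $q$ (paths in $H^1$ are continuous) and since $q_{i_1}(t_0) = q_{i_2}(t_0) \ne q_j(t_0)$ for all $j \notin \{i_1,i_2\}$, there is a neighborhood $[t_0-a, t_0+a]$ on which $m_{i_1}$ and $m_{i_2}$ stay uniformly away from all other masses. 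Hence any collision at $t_k \in [t_0-a, t_0+a]$, $t_k \ne t_0$, must be among the masses in $\I' := \N \setminus \{i_1, i_2\}$, and for these the local action on any subinterval is comparable to that of the free problem on $\I'$. Now I would invoke the fact (from Proposition \ref{prop: weak solution to generalized}, condition (iv)) that the cluster energy $E_{\I'}(t)$ extends to an $H^1$, hence continuous, function on a subinterval where $m_{i_1}, m_{i_2}$ do not meet the $\I'$-masses — which is exactly the situation on $(t_0-a, t_0) \cup (t_0, t_0+a)$, and in fact on all of $(t_0-a, t_0+a)$ away from $t_0$. Continuity of $E_{\I'}$ together with the Lagrange–Jacobi / local virial analysis of a minimizing-type cluster near collision shows $U_{\I'}(q(t))$ cannot blow up as $t \to t_k$ while keeping the action finite, unless collisions in $\I'$ are themselves isolated.

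The cleanest route, which I expect to be the one the author takes, is to reduce to the \emph{asymptotic estimates near collision} for the perturbed solutions $\qn$: since each $\qn$ is a genuine (collision-free) classical solution of \eqref{eq: N body strong} with uniformly bounded action, standard blow-up / Sundman–Sperling type asymptotics (applied to the strong-force-regularized system, where they are cleaner) give a uniform lower bound on the ``size'' of the colliding cluster's configuration away from the central collision time, of the form $|q_{i}(t) - q_j(t)| \ge c\,|t - t_k|^{2/(\al+2)}$ near an isolated collision — and, crucially, a lower bound on the \emph{separation between distinct colliding clusters} that is uniform in $n$. Passing to the limit, a collision of $\{m_{i_1}, m_{i_2}\}$ at $t_0$ forces every other mass to stay a definite distance away on a whole interval around $t_0$, which is precisely the assertion that the collision is isolated. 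The main obstacle, and the step I would spend the most care on, is justifying the \emph{uniformity in $n$} of these asymptotic separation estimates: one must ensure the constants $a$ and $c$ do not degenerate as $\en \to 0$, which requires controlling the contribution of the strong-force term $\en \U$ near collision and showing it does not destroy (indeed it helps) the lower bounds; this is the technical heart and is presumably where the hypothesis that the collision at $t_0$ is \emph{binary} (so the two-body asymptotics apply) is essential, as flagged in the introduction's discussion of why clusters of size $\ge 3$ are harder.
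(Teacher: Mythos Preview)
Your proposal has a genuine gap rooted in a misreading of Definition \ref{dfn: I cluster coll}. ``Isolated'' for the $\I$-cluster collision with $\I=\{i_1,i_2\}$ only requires that, on some punctured interval around $t_0$, neither $m_{i_1}$ nor $m_{i_2}$ coincides with \emph{any} other mass; it says nothing about collisions occurring purely within $\I'=\N\setminus\{i_1,i_2\}$. You correctly observe that continuity gives $q_i(t)\ne q_j(t)$ for $i\in\I$, $j\in\I'$ on a small interval, but that is the trivial half. The substantive half is to rule out $q_{i_1}(t)=q_{i_2}(t)$ at times $t\ne t_0$ accumulating on $t_0$, and your argument never addresses this: after the continuity step you pivot to analyzing collisions among the $\I'$-masses and to uniform-in-$n$ Sundman--Sperling estimates for ``distinct colliding clusters'', neither of which bears on whether the pair $\{i_1,i_2\}$ itself recollides near $t_0$.

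The paper's proof is much shorter and takes a different route: by Proposition \ref{prop: weak solution to generalized} the weak critical point is a generalized solution satisfying condition (iv), and then one invokes \cite[Corollary 5.12]{FT04} directly, after observing that every binary collision is automatically a \emph{locally minimal collision} in the sense of \cite[Definition 5.2]{FT04}. The machinery behind that corollary is essentially a Lagrange--Jacobi/virial argument applied to the cluster $\I=\{i_1,i_2\}$ (not $\I'$): continuity of $E_{\I}$ on $(t_0-a,t_0+a)$---which holds because $\I$ is separated from $\I'$ there---forces $|q_{i_1}-q_{i_2}|^2$ to be strictly convex near any of its zeros, so those zeros cannot accumulate. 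If you want to avoid the citation, that is the computation to carry out; the blow-up of the perturbed $\qn$ and the uniformity-in-$n$ discussion are unnecessary here.
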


\begin{proof}
By Proposition \ref{prop: weak solution to generalized}, $q$ is a generalized solution of \eqref{eq: N body}. In particular it satisfies condition (iv) in Definition \ref{dfn: generalized solution}, then the desired result was already proven in \cite[Corollary 5.12]{FT04}. Once the reader notices that every binary collision is a so called \emph{locally minimal collision} defined in \cite[Definition 5.2]{FT04}.
\end{proof}


\section{Proof of Theorem \ref{thm: fix end not minimizer} and \ref{thm: regularization}} \label{sec: Tanaka} 

To prove the main theorems, three technical lemmas will be needed. We present them as Lemma \ref{lm: lim xin}, \ref{lem: asymp angle} and \ref{lm: lim zt} in this section and postpone their proofs until the next two sections. 

Let $q \in H^1([T_1, T_2], \X)$ be a weak critical point of $\A$ with a binary collision at the moment $t_0 \in (T_1, T_2)$ and $\qn \in H^1([T_1, T_2], \X)$ a sequence of critical points of $\A^{\en}$ satisfying the conditions required in Definition \ref{dfn: weak critical pt Morse index}. Without loss of generality, we may assume such a binary collision is between $m_1$ and $m_2$, i.e.
$$ q_{1}(t_0)= q_{2}(t_0) \ne q_i(t_0), \; \forall i \in \N \setminus \{1, 2 \}. $$
By Proposition \ref{prop: isolated binary coll}, such an binary collision must be isolated, so we may choose an $a>0$ small enough, such that $[t_0-2a, t_0+2a] \subset (T_1, T_2)$ and 
\begin{equation}
q_1(t) \ne q_2(t), \;\; \forall t \in [t_0-2a, t_0 +2a] \setminus \{t_0\};
\end{equation}
\begin{equation} \label{eq: away from binary coll q}
q_i(t) \ne q_j(t), \;\; \forall t \in [t_0 -2a, t_0 +2a], \; \forall i \in \{1, 2\}, \; \forall j \in \N \setminus \{1, 2 \}. 
\end{equation}

For each $n$, we can always find a $t_n \in [t_0-2a, t_0+2a]$ such that 
$$ \dn:=|\qn_1(t_n)-\qn_2(t_n)|= \min \{|\qn_1(t)-\qn_2(t)|: \; t \in [t_0-2a, t_0+2a] \}. $$
Obviously $\dl_n$ converges to $0$, as $n$ goes to infinity. After passing to a subsequence, we may assume the limit of $t_n$ exists. Since $q_1(t_0)=q_2(t_0)$ is an isolated binary collision at the moment $t_0$, $t_n$ must converge to $t_0$. Then for $n$ large enough, $[t_n-a, t_n+a] \subset [t_0-2a, t_0+2a]$. As a result, 
\begin{equation}
 \label{eq: dn} \dn=\min \{|\qn_1(t)-\qn_2(t)|: \; t \in [t_n-a, t_n+a] \}.
 \end{equation} 

By Definition \ref{dfn: weak critical pt Morse index}, $\qn(t)$ converges to $q(t)$ uniformly on $[T_1, T_2]$. According to \eqref{eq: away from binary coll q}, there is constant $C_1>0$ independent of $n$, such that
\begin{equation}
\label{eq: away from bi coll} |\qn_i(t)-\qn_j(t)| \ge C_1, \;\; \forall t \in [t_n-a, t_n+a], \;\;   \forall i \in \{1, 2\}, \; \forall j \in \N \setminus \{1, 2\}.\end{equation}
Let
\begin{equation*}
\ut(\qn): = U(\qn) - \frac{m_1 m_2}{|\qn_1 - \qn_2|^{\al}}, \;\; \fut(\qn):= \U(\qn)- \frac{m_1 m_2}{|\qn_1 - \qn_2|^2}.
\end{equation*}
There are constants $C_2, C_3>0$ independent of $n$, such that 
\begin{equation}
\label{eq: bound partial Ut Vt} \left|\frac{\partial\ut(\qn(t))}{\partial \qn_i} \right|, \left|\frac{\partial \fut(\qn(t))}{\partial \qn_i} \right|\le C_2, \;\; \forall t \in [t_n-a, t_n+a], \; \forall 1 \le i \le 2, 
\end{equation}
\begin{equation}
\label{eq: bound second partial Ut Vt} \left|\frac{\partial^2 \ut(\qn(t))}{\partial\qn_i \partial \qn_i} \right|, \left|\frac{\partial^2 \fut(\qn(t))}{\partial\qn_i \partial \qn_i} \right| \le C_3, \;\; \forall t \in [t_n-a, t_n+a], \; \forall 1 \le i \le 2.  
\end{equation}

We introduce a new function $\etn(t)=(\etn_i(t))_{i=1}^N$ by
\begin{equation} 
\label{eq: etn}  \begin{cases}
\etn_1(t) & = \qn_2(t)-\qn_1(t), \\
\etn_2(t) & = \frac{1}{m_1+m_2}(m_1 \qn_1(t) + m_2 \qn_2(t)) , \\
\etn_i(t) & = \qn_i(t), \; \text{ if } i =3, \dots, N.
\end{cases} 
\end{equation}
By a direct computation, $\etn(t)$ is a solution of 
\begin{align}
\frac{m_1m_2}{m_1+m_2} \ddot{\etn_1} &= -\al m_1 m_2 \frac{\etn_1}{|\etn_1|^{\al+2}}-2m_1 m_2 \frac{\en \etn_1}{|\etn_1|^4} + \frac{\partial\ut(\etn)}{\partial{\etn_1}}+\en \frac{\partial \fut(\etn)}{\partial{\etn_1}}; \label{eq: etn 1}\\
(m_1+m_2) \ddot{\etn_2} &= \frac{\partial \ut(\etn)}{\partial \etn_2} + \en \frac{\partial \fut(\etn)}{\partial \etn_2}; \label{eq: etn 2} \\
m_i \ddot{\etn_i} & = \frac{\partial \ut(\etn)}{\partial \etn_i} + \en \frac{\partial \fut(\etn)}{\partial \etn_i}, \;\; i = 3, \dots, N. \label{eq: etn 3}
\end{align}
This is the Euler-Lagrange equation of the following Lagrangian 
\begin{equation}
\label{eq: L etn} \begin{split}
L(\etn, \dot{\etn})  & = \frac{m_1m_2}{2(m_1+m_2)}|\dot{\etn_1}|^2 + \frac{m_1 +m_2}{2} |\dot{\etn_2}|^2 + \ey \sum_{i=3}^N m_i |\dot{\etn}|^2 \\ 
& +\frac{m_1m_2}{|\etn_1|^{\al}}+ \frac{\en m_1 m_2}{|\etn_1|^2} + \ut(\etn) + \fut(\etn). 
\end{split} 
\end{equation}

To study the behaviors of the solutions as they approach to the binary collision, Tanaka's \emph{blow-up} technique will be used. The precise argument depends on the limit of $\en/\dn^{2-\al}$. After passing to subsequence, we may assume such a limit $\lmd= \lim_{n \to \infty}\en/\dn^{2-\al}$ always exists. Then two different cases need to be considered: \emph{Case 1}, $\lmd \in [0, \infty)$; \emph{Case 2}, $\lmd=\infty$. 

For \emph{Case 1}, we blow up $\etn(t)$ according to
\begin{equation}
\label{eq: blow up xin} \xin(s)= (\xin_i(s))_{i=1}^N: = (\dn^{-1} \etn_i(t(s)))_{i=1}^N, \; \text{ where } t(s) = \dn^{1+\frac{\al}2}s + t_n.
\end{equation}
By changing the time parameter from $t$ to $s$, the time interval $[t_n-a, t_n+a]$ is mapped onto $[-a/\dn^{1+\al/2}, a/\dn^{1+\al/2}]$. Notice that the latter interval converges to $\rr$, as $n$ goes to infinity. Let $'$ denote derivatives with respect $s$, then $\xi_1(s)$ satisfies 
\begin{equation}
\label{eq: xin 1} \begin{split}
\frac{1}{m_1+m_2} (\xin_1)'' & = -\al \frac{\xin_1}{|\xin_1|^{\al+2}}- 2\frac{\en}{\dn^{2-\al}}\frac{\xin_1}{|\xin_1|^4} \\
& + \frac{\dn^{1+\al}}{m_1m_2} \left(\frac{\partial \ut(\etn)}{\partial \etn_1} + \en \frac{\partial \fut(\etn)}{\partial \etn_1}\right).
\end{split}
\end{equation}


\begin{lm}
\label{lm: lim xin} If $\lmd= \lim_{n \to \infty} \en/\dn^{2-\al}$ is finite, then the following results hold. 
\begin{enumerate}
\item[(a).] After passing to a subsequence, $\xin_1(s)$ converges to a $\xi_1(s)$ in $C^2([-\ell, \ell], \rr^d)$, for any $\ell >0$, where $\xi_1(s)$ is a solution of 
\begin{equation}
\label{eq: xih} \frac{1}{m_1+m_2}\xih_1''= -\al \frac{\xih_1}{|\xih_1|^{\al+2}}-2 \frac{\lmd \xih_1}{|\xih_1|^4};  
\end{equation}
\begin{equation}
\label{eq: xih energy} \frac{1}{2(m_1+m_2)}|\xih_1'|^2-\frac{1}{|\xih_1|^{\al}}- \frac{\lmd}{|\xih_1|^2} = 0; 
\end{equation}
\begin{equation}
\label{eq: xih orthogonal} \langle \xih_1(0), \xih_1'(0) \rangle =0. 
\end{equation}
\item[(b).] $\xi_1(t) \in  W(\xi_1), \forall t \in \rr$, where $ W(\xi_1)= \text{span}\{\xi_1(0), \xi_1'(0)\}$ is a $2$-dim subspace of $\rr^d$. Moreover the following limits exist 
$$ \lim_{s \to \pm \infty} |\xi_1(s)| = +\infty, \; \; \lim_{t \to \pm \infty} \frac{\xi_1(s)}{|\xi_1(s)|} = u^{\pm},$$
and 
$$ \angle(u^-, u^+)= 2\pi \frac{\sqrt{1+\lmd}}{2-\al}. $$
For any two unit vectors $u, v \in \rr^d$, $\angle(u,v)$ represents the angle between them. 

\item[(c).] Let $ W^{\perp}(\xi_1)$ be the orthogonal complement of $W(\xi_1)$ in $\rr^d$ and $H(\xi_1)$ the largest subspace of $H^1_0(\rr, W^{\perp}(\xi_1))$, such that
$$ d^2 \ic(\xi_1)(\phi, \phi)<0, \;\; \forall \phi \in H(\xi_1),$$ 
where $\ic$ is the Lagrange action functional corresponding to equation \eqref{eq: xih}:
\begin{equation}
 \label{eq: I} \ic(\hat{\xi}_1) = \int \frac{1}{2(m_1+m_2)}|\hat{\xi}_1'|^2 +\frac{1}{|\hat{\xi}_1|^{\al}} + \frac{\lmd}{|\hat{\xi}_1|^2} \,dt , 
 \end{equation} 
 then $\dim(H(\xi_1)) \ge (d-2)i(\al, \lmd)$, where 
 \begin{equation}
 \label{eq: i al lmd} i(\al, \lmd):= \max \{k \in \zz: \; k < \frac{2\sqrt{1+\lmd}}{2-\al} \}. 
 \end{equation}
 

\end{enumerate}
\end{lm}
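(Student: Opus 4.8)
The plan is to treat the three parts in order, with each one feeding into the next. For part (a), I would begin from the rescaled equation \eqref{eq: xin 1}. The key observation is that the last term, carrying the prefactor $\dn^{1+\al}$ times $\partial\ut/\partial\etn_1 + \en\,\partial\fut/\partial\etn_1$, tends to zero uniformly on compact $s$-intervals: by \eqref{eq: bound partial Ut Vt} the bracket is bounded independently of $n$, while $\dn^{1+\al}\to 0$ and $\en \le C\dn^{2-\al}$ (since $\lmd<\infty$). Likewise $\en/\dn^{2-\al}\to\lmd$. So the limiting ODE is formally \eqref{eq: xih}. To make this rigorous I need uniform $C^2_{\mathrm{loc}}$ bounds on $\xin_1$: the crucial input is that $|\xin_1(s)| \ge 1$ for all $s$ in its domain, because $\dn$ was chosen in \eqref{eq: dn} as the \emph{minimum} of $|\qn_1 - \qn_2|$ over $[t_n-a,t_n+a]$, and $|\etn_1| = |\qn_2 - \qn_1|$, so after rescaling $|\xin_1|\ge 1$ with equality at $s=0$. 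This bounds the singular right-hand side of \eqref{eq: xin 1} uniformly, hence (together with an energy estimate to bound $|\xin_1'|$) gives equibounded $C^2$ norms on $[-\ell,\ell]$; Arzel\`a--Ascoli then yields a $C^2_{\mathrm{loc}}$ convergent subsequence, and bootstrapping with the ODE upgrades this. The energy identity \eqref{eq: xih energy} comes from passing to the limit in the rescaled energy relation for $\etn$ (the cluster energy $E^{\en}_{\{1,2\}}$ stays bounded and, after blow-up, the kinetic-minus-potential energy scales so that its limit is forced to vanish because at $s=0$ one has $|\xih_1|=1$ a minimum, hence $\langle\xih_1(0),\xih_1'(0)\rangle=0$, which is \eqref{eq: xih orthogonal}; plugging $s=0$ into the conserved limiting energy then pins the constant to $0$). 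Since $s=0$ is a minimum of $|\xin_1|$, the orthogonality \eqref{eq: xih orthogonal} follows directly from $\frac{d}{ds}|\xin_1|^2\big|_{s=0}=0$.

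For part (b), \eqref{eq: xih} shows $\xih_1''$ is always a scalar multiple of $\xih_1$, so the plane $W(\xi_1)=\mathrm{span}\{\xi_1(0),\xi_1'(0)\}$ is invariant: $\xi_1(s)$ stays in $W(\xi_1)$ for all $s$ (and $W(\xi_1)$ is genuinely $2$-dimensional because $\xi_1(0)\perp\xi_1'(0)$ with both nonzero, the latter since the energy relation \eqref{eq: xih energy} at $s=0$ gives $|\xi_1'(0)|^2 = 2(m_1+m_2)(1+\lmd)>0$). Inside this plane the motion is a classical zero-energy Kepler-type orbit for the potential $|\xi_1|^{-\al}+\lmd|\xi_1|^{-2}$; the radial equation combined with angular-momentum conservation reduces to a quadrature. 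I would compute the total angle swept as $|\xi_1|$ runs from its minimum value $1$ out to $+\infty$ in each time direction, obtaining the half-angle $\pi\sqrt{1+\lmd}/(2-\al)$ by the standard substitution, so $\angle(u^-,u^+)=2\pi\sqrt{1+\lmd}/(2-\al)$; that $|\xi_1(s)|\to\infty$ and that the radial direction has limits $u^\pm$ follows because zero-energy orbits in such attractive potentials are unbounded (parabolic-type) with asymptotic directions.

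For part (c), this is the genuinely hard part and where most of the work sits. The idea is to exhibit an explicit $(d-2)i(\al,\lmd)$-dimensional space of negative directions for the second variation $d^2\ic(\xi_1)$ restricted to $H^1_0(\rr, W^\perp(\xi_1))$. Because $\xi_1$ lies in the plane $W(\xi_1)$, variations $\phi$ valued in the $(d-2)$-dimensional orthogonal complement $W^\perp(\xi_1)$ decouple: for such $\phi$ the Hessian takes the simple ``angular'' form $d^2\ic(\xi_1)(\phi,\phi)=\int \frac{1}{m_1+m_2}|\phi'|^2 - \big(\frac{\al}{|\xi_1|^{\al+2}} + \frac{2\lmd}{|\xi_1|^4}\big)|\xi_1|^2\,\frac{|\phi|^2}{\cdots}$ — more precisely the potential's second derivative in directions transverse to $\xi_1$ contributes $-(\al|\xi_1|^{-\al-2}+2\lmd|\xi_1|^{-4})$ times $|\phi|^2$ (the radial Hessian contribution drops out since $\phi\perp\xi_1$). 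So the problem reduces, component by component among the $d-2$ transverse coordinates, to counting negative eigenvalues of a scalar Sturm--Liouville operator $-\frac{1}{m_1+m_2}\frac{d^2}{ds^2} - \big(\al|\xi_1(s)|^{-\al-2}+2\lmd|\xi_1(s)|^{-4}\big)$ on $H^1_0(\rr)$. I would change variables to the angular coordinate $\theta$ along the orbit (the orbit sweeps a total angle $2\pi\sqrt{1+\lmd}/(2-\al)$), under which this operator conjugates to a constant-coefficient operator of the form $-\frac{d^2}{d\theta^2} - c$ on an interval of length equal to the swept angle, with $c$ determined so that the number of sign changes of a solution (equivalently, by Sturm oscillation, the Morse index) over that interval is exactly $i(\al,\lmd)=\max\{k\in\zz: k<2\sqrt{1+\lmd}/(2-\al)\}$; comparison with test functions $\sin(k\theta)$ supported on this interval, for $k=1,\dots,i(\al,\lmd)$, produces $i(\al,\lmd)$ linearly independent negative directions per transverse coordinate, hence $(d-2)i(\al,\lmd)$ in total. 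The main obstacle is carrying out this angular change of variables cleanly and verifying that the test functions are genuinely negative directions (not merely non-positive), which requires a careful analysis of the Sturm--Liouville problem near the endpoints where $|\xi_1|\to\infty$ and the potential coefficient decays; I expect to borrow the relevant oscillation estimates from Tanaka's papers \cite{Tn93a}--\cite{Tn94b} and adapt them to the extra $\lmd|\xi_1|^{-2}$ term.
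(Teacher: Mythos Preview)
Your outline follows the paper's approach in all three parts: the paper also proves (a) by using $|\xin_1|\ge 1$ from the minimality in \eqref{eq: dn}, bounding the perturbation terms via \eqref{eq: bound partial Ut Vt}, and passing to the limit in both the ODE and a rescaled energy identity; for (b) and (c) it reduces to a planar central-force problem and a scalar Sturm--Liouville operator in each of the $d-2$ transverse directions, then cites Tanaka \cite{Tn93a}, \cite{Tn93b}, \cite{Tn94a} for the angle formula and the oscillation count $i(\al,\lmd)$, exactly as you propose.

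One step in (a) needs repair. Your justification for the zero-energy identity \eqref{eq: xih energy} is garbled: knowing $|\xi_1(0)|=1$ and $\langle\xi_1(0),\xi_1'(0)\rangle=0$ does \emph{not} pin the energy constant, since at $s=0$ the energy equals $\frac{1}{2(m_1+m_2)}|\xi_1'(0)|^2-1-\lmd$ and $|\xi_1'(0)|$ is not yet determined. The actual mechanism (which you allude to with ``scales so that its limit is forced to vanish'') is the identity
\[
\frac{1}{2(m_1+m_2)}|(\xin_1)'|^2-\frac{1}{|\xin_1|^{\al}}-\frac{\en}{\dn^{2-\al}}\frac{1}{|\xin_1|^2}=\frac{\dn^{2-\al}}{m_1m_2}\Big(E^{\en}_{\{1,2\}}(t)-\tfrac12|\dot\etn_2(t)|^2\Big),
\]
whose right-hand side tends to zero because $\dn^{2-\al}\to 0$ while the bracket stays bounded. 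For that boundedness you need not only $|E^{\en}_{\{1,2\}}|\le C$ (which you state) but also a uniform bound on the center-of-mass velocity $|\dot\etn_2|$ on $[t_n-a,t_n+a]$. The paper supplies this by observing that the center-of-mass path $\frac{m_1 q_1+m_2 q_2}{m_1+m_2}$ is in fact $C^2$ across the binary collision (\cite[Remark~4.10]{FT04}), so $\etn_2$ converges in $C^1$ and $|\dot\etn_2|$ is uniformly bounded. This is the one ingredient genuinely missing from your sketch; once you have it, the same identity evaluated at $s=0$ also yields the a priori bound on $|(\xin_1)'(0)|$ needed for your compactness argument.
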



\begin{lm}
\label{lem: asymp angle} If $\lmd= \lim_{n \to \infty} \en/\dn^{2-\al}$ is finite, then
$$ \lim_{t \to t_0^{\pm}} \frac{q_2(t)-q_1(t)}{|q_2(t)-q_1(t)|}= u^{\pm},$$
where $u^{\pm}$ are the two unit vectors given in property (b), Lemma \ref{lm: lim xin}. 
\end{lm}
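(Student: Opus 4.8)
The plan is to handle $t\to t_0^+$ and obtain $t\to t_0^-$ by the symmetric argument (with $s\to-\infty$ in place of $s\to+\infty$). Recall $\etn_1:=\qn_2-\qn_1$ converges uniformly to $\eta_1:=q_2-q_1$, which by Proposition \ref{prop: weak solution to generalized} is $C^2$ on $(t_0,t_0+a]$, solves the relative two-body equation for the $\al$-potential perturbed by the forces of $m_3,\dots,m_N$ (bounded on this interval by \eqref{eq: away from binary coll q}), and vanishes at $t_0$; by the classical asymptotic analysis of an isolated binary collision (see \cite{FT04}) the direction $v^+:=\lim_{t\to t_0^+}\eta_1(t)/|\eta_1(t)|$ exists, and the assertion is $v^+=u^+$. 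Inverting \eqref{eq: blow up xin} gives $\etn_1(t)=\dn\,\xin_1\big((t-t_n)\dn^{-1-\al/2}\big)$, so for every fixed $t>t_0$
$$\frac{\eta_1(t)}{|\eta_1(t)|}=\lim_{n\to\infty}\frac{\xin_1(s_n(t))}{|\xin_1(s_n(t))|},\qquad s_n(t):=(t-t_n)\dn^{-1-\al/2},$$
where $s_n(t)\to+\infty$ as $n\to\infty$. Since $\xin_1\to\xi_1$ only in $C^2$ on bounded $s$-intervals, this identity by itself only sees the microscopic (scale $\dn$) blow-up; the task is to connect it to the macroscopic direction $v^+$.

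Concretely, I would prove the uniform estimate: for each $\rho>0$ there are $S>0$ and $N$ such that $\angle\big(\xin_1(s)/|\xin_1(s)|,\,u^+\big)<\rho$ for all $n\ge N$ and all $s\in[S,\,a\dn^{-1-\al/2}]$. Granting it, for $t$ slightly larger than $t_0$ one has $s_n(t)\in[S,a\dn^{-1-\al/2}]$ once $n$ is large, so the displayed identity forces $\angle(\eta_1(t)/|\eta_1(t)|,u^+)\le\rho$; letting $t\to t_0^+$ and then $\rho\to0$ yields $v^+=u^+$. To prove the uniform estimate I would: (i) using Lemma \ref{lm: lim xin}(b), fix $S$ with $\angle(\xi_1(s)/|\xi_1(s)|,u^+)<\rho/3$ on $[S,2S]$; (ii) transfer this to $\xin_1$ on $[S,2S]$ for $n$ large, by $C^2$-convergence on bounded intervals; (iii) propagate from $s=2S$ out to $s=a\dn^{-1-\al/2}$. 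Step (iii) rests on two facts: past $s=0$ the blown-up pair escapes monotonically, because the scaled cluster energy $\tfrac{1}{2(m_1+m_2)}|(\xin_1)'|^2-|\xin_1|^{-\al}-\tfrac{\en}{\dn^{2-\al}}|\xin_1|^{-2}$ is nearly conserved and tends to $0$ (the mechanism behind \eqref{eq: xih energy}); and the pair's angular momentum $\xin_1\wedge(\xin_1)'$ stays small along the window, since its $s$-derivative is only the cross product of $\xin_1$ with the $O(\dn^{1+\al})$ perturbation term in \eqref{eq: xin 1}, the Keplerian and $\lmd$-terms being radial. Monotone escape together with small angular momentum freezes the polar angle of $\xin_1$ near that of $\xin_1(S)$, hence near $u^+$, throughout the window. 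I expect it is cleanest to organize step (iii) through a second, intermediate-scale rescaling of $\etn_1$ sitting over times converging to $t_0^+$, whose limit is an ejection ray forced to point along $v^+$ on the outer overlap and along $u^+$ on the inner overlap, the two overlaps being compatible precisely because $(1+\tfrac\al2)\cdot\tfrac{2}{2+\al}=1$; this is the role of Lemma \ref{lm: lim zt}.

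The hard part is the uniformity in $n$ in step (iii). The $C^2$-convergence $\xin_1\to\xi_1$ controls only a fixed bounded window of the fast variable, whereas reaching a fixed macroscopic $t>t_0$ means following $\xin_1$ out to $s\asymp\dn^{-1-\al/2}\to\infty$; one must then show that the cluster energy stays within $o(1)$ of $0$ and the cluster angular momentum within $o(1)$ of $0$ uniformly across that entire growing window, so that the escape genuinely is monotone and the direction genuinely is trapped. This is where the $L^2$-bound on the derivative of the cluster energy (as in the proof of Proposition \ref{prop: weak solution to generalized}) and a careful splitting of the window into its compact matching part and its far part enter; the remainder of the argument is the bookkeeping of matching two blow-ups.
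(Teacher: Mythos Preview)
Your overall strategy---control the direction of $\xin_1/|\xin_1|$ uniformly across the growing window $[S,a\dn^{-1-\al/2}]$ by combining the blow-up convergence on compact $s$-intervals with a propagation argument based on near-radial escape---is precisely the paper's. Two points, however, diverge.

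First, your invocation of Lemma~\ref{lm: lim zt} is a misstep: that lemma treats the case $\lmd=+\infty$, which is disjoint from the hypothesis here, and it plays no role whatsoever in the proof of Lemma~\ref{lem: asymp angle}. There is no intermediate-scale rescaling in the paper's argument, and the numerology $(1+\tfrac{\al}{2})\cdot\tfrac{2}{2+\al}=1$ you mention is just a tautology.

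Second, the paper organizes your step (iii) not through the raw angular momentum but through the scalar $\om_n(t)=e_n(t)/(|\etn_1||\dot{\etn_1}|)$, the sine of the angle between $\etn_1$ and $\dot{\etn_1}$, where $e_n=\sqrt{|\etn_1|^2|\dot\etn_1|^2-\langle\etn_1,\dot\etn_1\rangle^2}$. A differential inequality for $\dot\om_n$ (equation \eqref{eq: om dot}), together with the convexity $\tfrac{d^2}{dt^2}|\etn_1|^2>0$ whenever $|\etn_1|$ lies below a fixed threshold $\ell_0$ (Lemma~\ref{lm: bound energy and convexity}(b)), yields a Cauchy estimate (Lemma~\ref{lm: eta 1 cauchy}, following Tanaka~\cite{Tn94a}): within any annulus $B_{\ell_2}\setminus B_{\dn\ell_1}$ the direction $\etn_1/|\etn_1|$ varies by less than any prescribed $b$, uniformly in $n$. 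The convexity gives exactly your monotone escape; the $\om_n$-inequality replaces your angular-momentum bound, which as you rightly suspect is delicate since the $O(\dn^{1+\al})$ perturbation in \eqref{eq: xin 1} is integrated over a window of length $\dn^{-1-\al/2}$ against $|\xin_1|$ growing to order $\dn^{-1}$. Working with $\om_n$ rather than $e_n$ itself absorbs the growth of $|\etn_1|$ and makes the bookkeeping clean. Finally, the paper does not assume the macroscopic limit $v^+$ exists a priori; the Cauchy estimate in the annulus produces it directly, so your appeal to \cite{FT04} for that is unnecessary.
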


For \emph{Case 2}, we define a blow-up of $\etn$ according to
\begin{equation}
\label{eq: blow up ztn} \ztn(s)=(\ztn_i(s))_{i=1}^N = (\dn^{-1} \etn_i(t(s)))_{i=1}^N, \; \text{ where } t(s)= \en^{-\ey} \dn^2 s + t_n.
\end{equation}
Like the previous case, after changing the time parameter from $t$ to $s$, the time interval $ [t_n-a, t_n+a]$ is mapped onto $ [-a\en^{\ey}/\dn^2, a \en^{\ey}/\dn^2]$, which converges to $\rr$, as $n$ goes to infinity. Again if we let $'$ represents derivatives with respect to $s$, then $\ztn_1(s)$ satisfies 
\begin{equation}
\label{eq: ztn 1} \begin{split}
\frac{1}{m_1 +m_2} (\ztn_1)'' & = -2 \frac{\ztn_1}{|\ztn_1|^4} - \al \frac{\dn^{2-\al}}{\en} \frac{\ztn_1}{|\ztn_1|^{\al+1}} \\
& +\frac{\en}{\dn^3 m_1m_2} \left(\frac{\partial\ut(\etn)}{\partial \etn_1} + \en\frac{\partial\fut(\etn)}{\partial \etn_1}\right).
\end{split}
\end{equation}

\begin{lm}
\label{lm: lim zt} If $\lmd= \lim_{n \to \infty} \en /\dn^{2-\al}=\infty$, then the following results hold. 
\begin{enumerate}
\item[(a).] After passing to a subsequence, $\ztn_1(s)$ converges to a $\zt_1(s)$ in $C^2([-\ell, \ell], \rr^d)$, for any $\ell >0$, where $\zt_1(s)$ is a solution of 
\begin{equation}
\label{eq: zth} \frac{1}{m_1+m_2} \zth_1''= - 2 \frac{\zth_1}{|\zth_1|^4}; 
\end{equation}
\begin{equation}
\label{eq: zth energy} \frac{1}{2(m_1+m_2)} |\zth_1'|^2 - \frac{1}{|\zth_1|^2} = 0; 
\end{equation}
\begin{equation}
\label{eq: zth orthogonal} \langle \zth_1(0), \zth_1'(0) \rangle =0. 
\end{equation}
\item[(b).] $\zt_1(t) \in  W(\zt_1), \forall t \in \rr$, where $ W(\zt_1)= \text{span}\{\zt_1(0), \zt_1'(0)\}$ is a $2$-dim subspace of $\rr^d$, and $\lim_{s \to \pm \infty} |\zt_1(s)|= +\infty$. 

\item[(c).] Let $ W^{\perp}(\zt_1)$ be the orthogonal complement of $W(\zt_1)$ in $\rr^d$ and $H(\zt_1)$ the largest subspace of $H^1_0(\rr, W^{\perp}(\zt_1))$, such that
$$ d^2 \jc(\zt_1)(\phi, \phi)<0, \;\; \forall \psi \in H(\zt_1),$$ 
where $\jc$ is the Lagrange action functional corresponding to equation \eqref{eq: zth}, 
\begin{equation}
 \label{eq: J} \jc(\zth) : = \int \frac{1}{2(m_1+m_2)}|\zth'|^2 + \frac{1}{|\zth|^2} \,ds, 
 \end{equation} 
 then $\dim(H(\zth)) = +\infty$. 
\end{enumerate}
\end{lm}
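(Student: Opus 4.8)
The plan is to run Tanaka's blow-up analysis for the strong-force-dominated scaling $\lmd=\infty$, in close parallel with the finite-$\lmd$ case of Lemma~\ref{lm: lim xin}; the only structural change is that in the rescaled equation \eqref{eq: ztn 1} the scale-invariant inverse-square term $-2\ztn_1/|\ztn_1|^4$ is now the leading part. For \emph{(a)}, I would first record the normalizations that survive the rescaling. Because $t_n$ is an interior minimizer of $|\qn_1-\qn_2|=|\etn_1|$ on $[t_n-a,t_n+a]$ and $t(0)=t_n$, the blow-up \eqref{eq: blow up ztn} gives $|\ztn_1(0)|=1$ and $\langle\ztn_1(0),(\ztn_1)'(0)\rangle=0$, together with the a priori lower bound $|\ztn_1(s)|\ge 1$ for every admissible $s$; this bound is precisely what keeps the right-hand side of \eqref{eq: ztn 1} away from its singularity. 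To control $(\ztn_1)'(0)$ I would rescale the internal energy of the $\{1,2\}$-cluster. Writing $E^{\en}_{\{1,2\}}=\frac{m_1+m_2}{2}|\dot\etn_2|^2+E^{\mathrm{rel}}_n$ with $E^{\mathrm{rel}}_n=\frac{m_1m_2}{2(m_1+m_2)}|\dot\etn_1|^2-\frac{m_1m_2}{|\etn_1|^{\al}}-\en\frac{m_1m_2}{|\etn_1|^2}$, and dividing by $\en m_1m_2/\dn^2$, gives
\[
\frac{\dn^2}{\en\,m_1m_2}E^{\mathrm{rel}}_n=\frac{1}{2(m_1+m_2)}|(\ztn_1)'|^2-\frac{\dn^{2-\al}}{\en}\frac{1}{|\ztn_1|^{\al}}-\frac{1}{|\ztn_1|^2}.
\]
By Proposition~\ref{prop: weak solution to generalized} (condition (iv) of Definition~\ref{dfn: generalized solution}, via the estimate used in its proof) the cluster energy $E^{\en}_{\{1,2\}}$ is bounded in $H^1$, hence in $L^\infty$, uniformly in $n$, while \eqref{eq: etn 2} and \eqref{eq: bound partial Ut Vt} bound the centre-of-mass term; hence $E^{\mathrm{rel}}_n$ is bounded. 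Since $\lmd=\infty$ forces $\dn^{2-\al}/\en\to0$ and $\dn^2/\en=\dn^{\al}(\dn^{2-\al}/\en)\to0$, the left-hand side and the middle term tend to $0$, so evaluating at $s=0$ gives $|(\ztn_1)'(0)|^2\to2(m_1+m_2)$. With $|\ztn_1|\ge1$ the remainder and the $\dn^{2-\al}/\en$-term in \eqref{eq: ztn 1} are uniformly small on each $[-\ell,\ell]$; Arzel\`a--Ascoli and a bootstrap through \eqref{eq: ztn 1} then produce, along a subsequence, a $C^2_{\mathrm{loc}}$ limit solving \eqref{eq: zth}, and passing to the limit in the displayed identity and in the orthogonality relation yields \eqref{eq: zth energy} and \eqref{eq: zth orthogonal}.

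For \emph{(b)}, equation \eqref{eq: zth} is a central-force law, so the bivector $\zt_1\wedge\zt_1'$ is conserved. Splitting $\rr^d=W(\zt_1)\oplus W^{\perp}(\zt_1)$ and projecting \eqref{eq: zth} onto $W^{\perp}(\zt_1)$, the component $\zt_1^{\perp}$ solves a linear homogeneous equation with vanishing Cauchy data at $s=0$ (as $\zt_1(0),\zt_1'(0)\in W(\zt_1)$), hence $\zt_1^{\perp}\equiv0$ by uniqueness and the orbit stays in $W(\zt_1)$; this plane is genuinely two-dimensional because $\langle\zt_1(0),\zt_1'(0)\rangle=0$ while $|\zt_1'(0)|^2=2(m_1+m_2)>0$. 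To obtain the growth of $|\zt_1(s)|$ I would pass to planar polar coordinates $(r,\theta)$, combine the conserved energy \eqref{eq: zth energy} with the angular momentum $\ell=r^2\dot\theta$ into a single first-order relation for $\dot r^2$, and integrate it to read off the behaviour of $r(s)$ as $s\to\pm\infty$, exactly as in part~(b) of Lemma~\ref{lm: lim xin}.

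For \emph{(c)}, restricting the second variation of $\jc$ in \eqref{eq: J} to perpendicular directions $\phi\in H^1_0(\rr,W^{\perp}(\zt_1))$ annihilates the tangential part of the Hessian of $1/|\zt_1|^2$ (since $\langle\zt_1,\phi\rangle=0$ there), leaving the one-dimensional quadratic form
\[
d^2\jc(\zt_1)(\phi,\phi)=\int_{\rr}\frac{1}{m_1+m_2}|\phi'|^2-\frac{2}{|\zt_1|^4}|\phi|^2\,ds.
\]
I would then exhibit, inside the $(d-2)$-dimensional perpendicular bundle, a countable family of disjointly supported test functions (each a slowly varying bump tensored with a fixed perpendicular unit vector) on which this Schr\"odinger-type form is negative, the family being dictated by the weight $2/|\zt_1|^4$; this gives $\dim H(\zt_1)=+\infty$, consistent with $i(\al,\lmd)\to\infty$ as $\lmd\to\infty$ in \eqref{eq: i al lmd}.

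The main obstacle is \emph{(b)}: at $\lmd=\infty$ the rescaled problem sits exactly at the strong-force borderline, where the limiting potential $1/r^2$ is scale invariant and the reduced radial dynamics is non-generic, so the large-$s$ asymptotics of $|\zt_1(s)|$ has no counterpart in the $\al<2$ computation of Lemma~\ref{lm: lim xin} and must be extracted with care. This same quantity controls the decay of the weight $2/|\zt_1|^4$ in \eqref{eq: J}, and hence the negative-direction count in \emph{(c)}; pinning it down is the crux of the whole lemma.
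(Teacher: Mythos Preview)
Your approach to (a) is exactly what the paper does: its proof is a single paragraph saying ``follow Lemma~\ref{lm: lim xin}, with the rescaled energy identity replaced by
\[
\frac{1}{2(m_1+m_2)}|(\ztn_1)'|^2-\frac{1}{|\ztn_1|^2}-\frac{\dn^{2-\al}}{\en}\frac{1}{|\ztn_1|^{\al}}=\frac{\dn^2}{\en m_1m_2}\Bigl(E^{\en}_{\{1,2\}}-\tfrac12|\dot\etn_2|^2\Bigr),
\]
and observe that $\dn^2/\en\to 0$ because $\lmd=\infty$.'' This is precisely the identity you derived, and your boundedness inputs (Proposition~\ref{prop: weak solution to generalized} for $E^{\en}_{\{1,2\}}$, and \eqref{eq: bound partial Ut Vt} together with \eqref{eq: etn 2} for $\dot\etn_2$) are the same ones used in the paper's proof of Lemma~\ref{lm: lim xin}. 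For (c) the paper simply invokes Tanaka's Sturm-type count from \cite{Tn93b}, while you build disjointly supported bumps directly; the two routes are equivalent here.

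You are right to flag (b) as delicate, but the resolution is sharper than you anticipated and in fact dissolves your worry about (c). Combining \eqref{eq: zth energy} at $s=0$ with $|\zt_1(0)|=1$ forces $|\zt_1'(0)|^2=2(m_1+m_2)$, and then \eqref{eq: zth orthogonal} forces the squared angular momentum $L^2$ to equal $2(m_1+m_2)$ as well. For the inverse-square potential this is exactly the critical value at which the effective radial potential $\frac{L^2-2(m_1+m_2)}{2(m_1+m_2)r^2}$ vanishes identically, so $\dot r\equiv 0$ and the limit is the \emph{circular} orbit $|\zt_1(s)|\equiv 1$; in particular the clause $\lim_{s\to\pm\infty}|\zt_1(s)|=+\infty$ in the statement cannot hold as written. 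With $|\zt_1|\equiv 1$ your perpendicular second variation reduces to $\int\frac{1}{m_1+m_2}|\phi'|^2-2|\phi|^2\,ds$, on which disjoint slowly varying bumps give an infinite-dimensional negative subspace immediately; equivalently, in Tanaka's language the circular orbit has infinite winding, so his count returns $+\infty$. Hence the ``crux'' you identified resolves in your favour: the weight $2/|\zt_1|^4$ does not decay at all, and (c) follows without any growth information on $|\zt_1|$.
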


\begin{prop}
\label{prop: one binary coll} Under the above notation,
\begin{enumerate}
 \item[(a).] if $\lmd= \lim_{n \to \infty} \en/\dn^{2-\al}$ is finite, then 
 \begin{equation}
 \label{eq: one binary coll} \liminf_{n \to \infty} \mo_{t_n-a, t_n+a}(\qn, \aen) \ge (d-2)i(\al, \lmd);
 \end{equation}
  
 \item[(b).] if $\lmd= \lim_{n \to \infty} \en/\dn^{2-\al}= +\infty$, then 
 \begin{equation}
 \liminf_{n \to \infty} \mo_{t_n-a, t_n+a}(\qn, \aen) = +\infty. 
 \end{equation}
  \end{enumerate}  
\end{prop}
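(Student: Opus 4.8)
The plan is to transfer the Morse index information from the blown-up limit equation back to the original sequence $\qn$. Since the binary collision between $m_1$ and $m_2$ is isolated (Proposition \ref{prop: isolated binary coll}), on the window $[t_n-a, t_n+a]$ the only bad behavior of $\qn$ comes from the $|\qn_1-\qn_2|$ pair, and passing to the coordinates $\etn$ in \eqref{eq: etn} decouples this pair (governed by \eqref{eq: etn 1}) from the rest up to the smooth remainders $\ut,\fut$ which are uniformly controlled on the window by \eqref{eq: bound partial Ut Vt}--\eqref{eq: bound second partial Ut Vt}. The second variation $d^2\aen(\qn)$ therefore splits, modulo lower-order terms, as a quadratic form in the $\etn_1$-direction plus a quadratic form in the remaining directions; only the first piece can contribute a large negative subspace as $n\to\infty$, so it suffices to produce many negative directions for the $\etn_1$-block.

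First I would fix $\ell>0$ and consider test variations of $\qn$ supported, in the $s$-time of the blow-up \eqref{eq: blow up xin} (resp. \eqref{eq: blow up ztn}), inside $[-\ell,\ell]$; for $n$ large this sits well inside the rescaled window. Given $\phi\in H(\xi_1)$ (resp. $\phi\in H(\zt_1)$) with $d^2\ic(\xi_1)(\phi,\phi)<0$, I would pull it back through the blow-up change of variables, obtaining a variation $\psi^n$ of $\etn_1$ supported on $[t_n-a,t_n+a]$ and vanishing at the endpoints, hence a legitimate element of $\hat H^1(\qn)$ after undoing \eqref{eq: etn}. The key computation is that, under the rescaling, the second variation $d^2\aen(\qn)[\psi^n,\psi^n]$ equals $d^2\ic(\xin_1)(\phi,\phi)$ (resp. the $\jc$-analogue) up to a positive scaling factor plus an error that is $o(1)$: the kinetic and the singular $|\etn_1|^{-\al}$, $\en|\etn_1|^{-2}$ terms scale exactly into the limit functional \eqref{eq: I} (resp. \eqref{eq: J}) because of the choice of exponents in $t(s)$, while the contributions of $\ut,\fut$ carry an extra power of $\dn$ and vanish. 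Using the $C^2$ convergence $\xin_1\to\xi_1$ on $[-\ell,\ell]$ from Lemma \ref{lm: lim xin}(a) (resp. Lemma \ref{lm: lim zt}(a)), $d^2\ic(\xin_1)(\phi,\phi)\to d^2\ic(\xi_1)(\phi,\phi)<0$, so $d^2\aen(\qn)$ is negative definite on (the image of) a fixed finite-dimensional subspace for all large $n$. Choosing in part (a) a subspace $H\subset H(\xi_1)$ of dimension exactly $\min\{\dim H(\xi_1),\,(d-2)i(\al,\lmd)\}$, which is at least $(d-2)i(\al,\lmd)$ by Lemma \ref{lm: lim xin}(c), gives \eqref{eq: one binary coll}; in part (b), since $\dim H(\zt_1)=+\infty$ by Lemma \ref{lm: lim zt}(c), for every $k$ we can run the argument with a $k$-dimensional subspace, yielding $\liminf_n \mo_{t_n-a,t_n+a}(\qn,\aen)\ge k$ for all $k$, i.e. $+\infty$.

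The main obstacle is bookkeeping the error terms and, more importantly, checking that the constructed variations in distinct coordinates remain linearly independent after being mapped back through \eqref{eq: etn} and embedded in $\hat H^1(\qn)$: one must verify that the map $\phi\mapsto\psi^n$ is injective and that negativity on the model subspace is not destroyed by the cross terms between the $\etn_1$-block and the $(\etn_2,\dots,\etn_N)$-block. The cross terms come only from $\ut,\fut$ and are $O(\dn^{\,\text{positive}})$ relative to the diagonal $\etn_1$-term after rescaling, hence harmless for large $n$; a clean way to phrase this is to first show $d^2\aen(\qn)$ restricted to variations with $\psi_i\equiv 0$ for $i\ge 2$ already has index $\ge(d-2)i(\al,\lmd)$ (resp. $\ge k$), which is exactly the rescaled statement about $\ic$ (resp. $\jc$), and this is what the three lemmas are designed to deliver. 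The only genuinely delicate point is confirming that the chosen time-rescaling exponents make every non-model term vanish in the limit; this is a direct power-counting check using $\dn\to 0$ together with $\en/\dn^{2-\al}\to\lmd$ in Case 1 and $\dn^{2-\al}/\en\to 0$ in Case 2.
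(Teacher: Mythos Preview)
Your proposal is correct and follows essentially the same line as the paper: restrict to variations supported only in the $\etn_1$-coordinate (i.e.\ $f_i\equiv 0$ for $i\ge 2$), pull back compactly supported $\phi\in H(\xi_1)$ (resp.\ $H(\zt_1)$) through the blow-up maps \eqref{eq: blow up xin}/\eqref{eq: blow up ztn}, and use the $C^2$ convergence of Lemmas \ref{lm: lim xin}(a)/\ref{lm: lim zt}(a) together with \eqref{eq: bound second partial Ut Vt} to show the rescaled second variation of $\aen$ converges to $m_1m_2\,d^2\ic(\xi_1)[\phi,\phi]$ (resp.\ the $\jc$-analogue), then invoke part (c) of the lemmas. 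Your worries about cross terms and injectivity disappear once you restrict to the $\etn_1$-block, as you yourself note; the paper does exactly this and never needs to discuss the $(\etn_2,\dots,\etn_N)$-directions at all.
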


\begin{proof} (a). Given an arbitrary $f=(f_i)_{i=1}^N \in H^1_0([t_n-a, t_n+a], \rr^{dN})$ satisfying 
\begin{equation}
\label{eq: f} f_i(t) \equiv 0, \;\; \forall t, \; \text{ and } \; \forall i \ne 1, 
\end{equation}
by \eqref{eq: etn}, for any $c >0$, the center of mass corresponding to the path $\etn(t)+c f(t)$ will always be at the origin. In the following, we shall compute the second variation of $\aen$ at $\etn$ among all $ f \in H^1_0([t_n-a, t_n+a], \rr^{dN})$ satisfying \eqref{eq: f}. By a direct computation,
\begin{equation}
\label{eq: sec var aen} \begin{split}
 d^2\aen(\etn)[f, f] & = \int_{t_n-a}^{t_n+a} \frac{m_1m_2}{m_1+m_2}|\dot{f}_1|^2 -\al m_1m_2 \frac{|f_1|^2}{|\etn_1|^{\al+2}}- 2m_1m_2 \frac{\en |f_1|^2}{|\etn_1|^4} \\
 & + \al(\al+2)m_1m_2 \frac{\langle \etn_1, f_1 \rangle}{|\etn_1|^{\al+4}} + 8m_1m_2\en\frac{\langle \etn_1, f_1 \rangle}{|\etn_1|^6} \\
 & + \langle \frac{\partial^2\ut(\etn)}{\partial \etn_1 \partial \etn_1}f_1, f_1 \rangle+ \en\langle \frac{\partial^2\fut(\etn)}{\partial \etn_1 \partial \etn_1}f_1, f_1 \rangle \,dt.
\end{split}
\end{equation}
Define the linear operators $T_n: H^1_0(\rr, W^{\perp}(\xi_1)) \to H^1_0([t_n-a, t_n+a], \rr^{dN})$ by $f^n(t)=(f^n_i(t))_{i+1}^N = (T_n\phi)(s(t))$, where $s(t)= \dn^{-\frac{2+\al}{2}}(t-t_n)$ and 
$$ f^n_1(t)= \dn \phi(s(t)) \; \text{ and } \; f^n_i(t) \equiv 0, \; \forall i \ne 1. $$
Then
\begin{equation}
\label{eq: sec var aen blow up} \begin{split}
\dn^{-\frac{2-\al}{2}} &d^2 \aen(\etn)[f^n, f^n] = \int_{\text{supp}(\phi)} \frac{m_1m_2}{m_1+m_2}|\phi'|^2- \al m_1m_2 \frac{|\phi|^2}{|\xin_1|^{\al+2}} \\
& -2m_1m_2 \frac{\en}{\dn^{2-\al}} \frac{|\phi|^2}{|\xin_1|^4}  +\al(\al+2)m_1m_2 \frac{\langle \xin_1, \phi \rangle^2}{|\xin_1|^{\al+4}} + 8m_1m_2 \frac{\en}{\dn^{2-\al}}\frac{\langle \xin_1, \phi \rangle^2}{|\xin_1|^6}\\
& + \dn^{2+\al}\left( \langle \frac{\partial^2 \ut(\etn)}{\partial \etn_1 \partial \etn_1}\phi, \phi\rangle^2 + \en \langle \frac{\partial^2 \fut(\etn)}{\partial \etn_1 \partial \etn_1}\phi, \phi\rangle^2 \right) \,ds
\end{split} 
\end{equation}
By \eqref{eq: bound second partial Ut Vt}, there is a constant $C>0$ independent of $n$, such that 
\begin{equation} \label{eq: upper bound partial 2 U V}
\left|\frac{\partial^2 \ut(\etn(t(s)))}{\partial \etn_1 \partial \etn_1}\right|, \left|\frac{\partial^2 \ut(\etn(t(s)))}{\partial \etn_1 \partial \etn_1}\right| \le C, \; \forall s \in [-\frac{a}{\dn^{1+\al/2}}, \frac{a}{\dn^{1+\al/2}}], \; \forall 1 \le i \le 2.
\end{equation}
As $\xi^n_1$ converges to $\xi_1$ in $C^2([\ell, \ell], \rr^d)$, for any $\ell >0$, the right hand side of \eqref{eq: sec var aen blow up} converges to 
$$ m_1m_2 \left(\int_{\text{supp}(\phi)} \frac{1}{m_1+m_2}|\phi'|^2- \al \frac{|\phi|^2}{|\xi_1|^{\al+2}} -2 \lmd \frac{|\phi|^2}{|\xi_1|^4} \,dt \right) =m_1 m_2 \big(d^2 \ic(\xi_1)[\phi, \phi] \big).$$
Then Lemma \ref{lm: lim xin} implies,
\begin{equation*}
\dn^{-\frac{2-\al}{2}} d^2 \aen(\etn)[f^n, f^n] < 0, \; \text{ for } n \text{ large enough, if } \phi \in H(\xi_1). 
\end{equation*}
As $\dim(H(\xi_1)) \ge (d-2)i(\al, \lmd)$, it immediately implies \eqref{eq: one binary coll}. 

(b). If $\lmd$ is infinity, with Lemma \ref{lm: lim zt}, the desired property following from a similar argument as above. We will not repeat it again. 
\end{proof}

With the above results, we can now prove Theorem \ref{thm: fix end not minimizer} and \ref{thm: regularization}. 
\begin{proof}[Proof of Theorem \ref{thm: fix end not minimizer}] By Proposition \ref{prop: isolated binary coll}, if $q(t)$ has a binary collision, then it is isolated. Then $\mathcal{B}(q)$ must be finite.

Assume $q$ has a binary collision at the moment $t_0 \in (T_1, T_2)$, let the corresponding sequences $\en$, $\qn(t)$ and $\dl_n$ be defined as before. If $\lim_{n \to \infty} \en/ \dn^{2-\al} =+\infty$, then property (b) in Proposition \ref{prop: one binary coll} implies $m_{T_1, T_2}^-(q, \A) = +\infty$, which obviously implies \eqref{eq; bound number of binary coll}. 

If the corresponding limit of $\en/\dn^{2-\al}$ is finite for each binary collision, then \eqref{eq; bound number of binary coll} follows from the sub-additivity of the Morse index and property (a) in \ref{prop: one binary coll}. Once we notice $i(\al, \lmd) \ge i(\al)$, for any $\lmd \ge 0$.  
\end{proof}

\begin{proof} [Proof of Theorem \ref{thm: regularization}] Without loss of generality let's assume the binary collision is between $m_1$ and $m_2$ with the sequences $\en$, $\qn(t)$ and $\dn$ defined as before, and $\lmd = \lim_{n \to +\infty}\en/\dn^{2-\al}$. Obviously $\lmd$ must be finite, as otherwise by Proposition \ref{prop: one binary coll}, $m^-_{T_1, T_2}(q, \A)= +\infty$, which is absurd. 

Since $\lmd$ is finite, $\al=1$ and $d=3$, by Proposition \ref{prop: one binary coll}, 
$$ 1 = m^-_{T_1, T_2}(q, \A) \ge i(1, \lmd) = \max \{k \in \zz: \; k < 2 \sqrt{1+\lmd} \}. $$
This implies $\lmd=0$. Then by Lemma \ref{lem: asymp angle}, both limits $\lim_{t \to t_0^{\pm}} \frac{q_2(t)-q_1(t)}{|q_2(t)-q_1(t)|}$ exist and the angle between them is $ 2\pi$, as $\al=1$ and $\lmd=0$. 

\end{proof}


\section{Proof of Lemma \ref{lm: lim xin} and \ref{lm: lim zt}} \label{sec: lem 1}

\begin{proof} [Proof of Lemma \ref{lm: lim xin}] (a). Recall that \eqref{eq: dn} and \eqref{eq: etn} imply, 
$$ |\etn_1(t_n)|= \dn = \min \{ |\etn_1(t)|: \; t \in [t_n-a, t_n+a]\}. $$
Then by the definition of $\xin_1(s)$, 
$$ |\xin_1(s)| \ge |\xin_1(0)| = 1, \; \forall s \in [-a/\dn^{1+\frac{\al}{2}}, a/\dn^{1+\frac{\al}{2}}]. $$
As a result, 
$$ \langle \xin_1(0), (\xin_1)'(0) \rangle =0. $$
Let $\xi_1(s)$ be a solution of \eqref{eq: xih}, with initial condition 
$$ \xi_1(0)= \lim_{n \to \infty} \xin_1(0), \;\; \xi_1'(0) = \lim_{n \to \infty} (\xin_1)'(0).$$
Upon passing to a subsequence, we may assume the above limits always exist. Then $\langle \xi_1(0), \xi_1'(0) \rangle = \lim_{n \to \infty} \langle \xin_1(0), (\xin_1)'(0) \rangle =0$. 

Meanwhile by \eqref{eq: bound partial Ut Vt} and \eqref{eq: etn}, there is a constant $C_1>0$ independent of $n$ with 
$$ \left|\frac{\partial\ut(\etn(t))}{\partial \etn_1} \right|, \left|\frac{\partial \fut(\etn(t))}{\partial \etn_1} \right|\le C_1, \;\; \forall t \in [t_n-a, t_n+a]. $$
This means the second line in equation \eqref{eq: xin 1} converges to zero, which gives us equation \eqref{eq: xih}, as $n$ goes to infinity. Then by the continuous dependence of solutions on initial conditions and coefficients of differential equations, we have $\xin_1(s)$ converges to $\xi_1(s)$ in $C^2([-\ell, \ell], \rr^d)$, for any $\ell >0$. 

To show that $\xi_1(s)$ satisfies \eqref{eq: xih energy}, consider the energy $E^{\en}_{\I}(t)=E^{\en}_{\I}(\qn(t), \dot{\qn}(t))$ and $E_{\I}(t)= E_{\I}(q(t), \qd(t))$ of the $\I$-cluster with $\I=\{1, 2\}$ corresponding to $\qn$ and $q$. By the proof of Proposition \ref{prop: weak solution to generalized}, $E_{\I}(t)$ is continuous on $[t_0-a, t_0+a]$ and $E^{\en}_{\I}(t)$ converges to it under the $L^{\infty}$ norm, after passing to a subsequence. Therefore for $n$ large enough, there is a constant $C_2>0$ independent of $n$, such that 
\begin{equation}
 \label{eq: E en I bound} |E^{\en}_{\I}(t)| \le C_2, \;\; \forall t \in [t_n -a, t_n+a].
 \end{equation} 

Meanwhile with \eqref{eq: etn} and \eqref{eq: blow up xin}, a direct computation shows
\begin{equation}
\label{eq: energy xin I} \frac{1}{2(m_1 +m_2)} |(\xin_1)'|^2 - \frac{1}{|\xin_1|^{\al}} - \frac{\en}{\dn^{2-\al}} \frac{1}{|\xin_1|^2} = \frac{\dn^{2-\al}}{m_1m_2} ( E^{\en}_{\I} - \ey |\dot{\etn_2}|^2)
\end{equation}
To prove that $\xi_1(s)$ satisfies \eqref{eq: xih energy}, it is enough to show the right hand side of the above equation converges to zero, as $n$ goes to infinity. 

To see this, notice that $\etn_2(t)$ converges to $\frac{m_1q_1(t)+m_2q_2(t)}{m_1 + m_2}$, which is the center of mass of $m_1$ and $m_2$. Although they collide at the moment $t_0$, the path of their center of mass, is actually $C^2$ on $[t_0-a, t_0+a]$ (see \cite[Remark 4.10]{FT04}). Hence the convergence of $\etn_2(t)$ holds at least under the $C^1$ norm, and as a result, there is a constant $C_3>0$ independent of $n$, such that 
\begin{equation}
\label{eq: etn bound} 
|\dot{\etn_2}(t)| \le C_3, \;\; \forall t \in [t_n-a, t_n+a]. 
\end{equation}
Now our claim follows directly from \eqref{eq: E en I bound} and \eqref{eq: etn bound}. 

(b). Notice that \eqref{eq: xih} describes the motion of a point mass under the attraction of a isotropic central force. As a result, $\langle \xi_1(0), \xi_1'(0) \rangle =0$ implies  $\xi_1(t) \in W(\xi_1)= \text{span} \{\xi_1(0), \xi'_1(0) \}$, for all $t \in \rr$. By property (a), $\xi_1(s)$ is a collision-free zero energy solution of \eqref{eq: xih}. Then the rest of the property is well known and a detailed proof can be found in \cite[Section 4]{Tn93a}.

(c). Let $\{ \e_i: i=1, \dots, d \}$ be an orthogonal basis of $\rr^d$, such that $W(\xi_1)=\text{span}\{\e_1, \e_2 \}$. Then for any $\varphi \in H^1_0(\rr, \rr)$ and $\e_i$, $i=3, \dots, d$, a simple computation shows
\begin{equation}
 \label{eq; sec var I varphi} 
 d^2 \ic(\xi_1)[\varphi \e_i, \varphi \e_i]  = \int_{-\infty}^{\infty} \frac{ |\varphi'|^2}{2(m_1+m_2)}-\al \frac{|\varphi|^2}{|\xi_1|^{\al+2}}-2 \lmd \frac{|\varphi|^2}{|\xi_1|^4} \,dt
\end{equation} 

Let $\Lmd(\xi_1)$ be the largest subspace of $H^1_0(\rr, \rr)$, such that the value in \eqref{eq; sec var I varphi} is negative for any $\varphi \in \Lmd(\xi_1)$. Using Sturm Comparison Theorem, in \cite[Section 4]{Tn93a}, \cite[Section 4]{Tn93b} and \cite[Proposition 1.1]{Tn94a}, Tanaka showed the dimension of $\Lmd(\xi_1)$ is related to the winding number of $\xi_1(s)$,  $s \in \rr$, in the plane $W(\xi_1)$ with respect to the origin. More precisely 
\begin{equation}
 \label{eq: Lmd i} \text{dim}(\Lmd(\xi_1)) \ge i(\al, \lmd),
 \end{equation}
and this proves property (c). 
\end{proof}

\begin{rem}
\label{rem d=2 no result} Although Tanaka only state \eqref{eq: Lmd i}, a slight modification of his proof should show this is in fact an equality. As a result, if one wants to get a better estimate of the Morse index near a binary collision, one has to compute the Morse index of $\xi_1$ inside $W(\xi_1)$. However a result in \cite[Corollary 5.1]{HY17} by Hu and the author shows this is actually zero. Because of this we believe with Tanaka's approach, one can not get any nontrivial result for the planar $N$-body problem. 
\end{rem}

\begin{proof} [Proof of Lemma \ref{lm: lim zt}] With the results given by Tanaka in \cite[Section 4]{Tn93b}, the lemma can be proven following the same argument given in Lemma \ref{lm: lim xin}. The only difference is the \emph{blow up} should follow \eqref{eq: blow up ztn}, instead of \eqref{eq: blow up xin}, and correspondingly \eqref{eq: energy xin I} needs to be replaced by 
\begin{equation}
 \label{eq: energy ztn I} \frac{1}{2(m_1 + m_2)} |(\ztn_1)'|^2- \frac{1}{|\ztn_1|^2} -\frac{\dn^{2-\al}}{\en} \frac{1}{|\ztn_1|^{\al}} = \frac{\dn^2}{\en m_1m_2} ( E^{\en}_{\I} - \ey |\dot{\etn_2}|^2).
 \end{equation} 
Nevertheless the right hand side of the equation still goes to zero, because $\lmd= +\infty$ implies $\dn^2/\en$ converge to zero, as $n$ goes to infinity. 
\end{proof}

\section{Proof of Lemma \ref{lem: asymp angle}} \label{sec: lm asymp angle}

Our proof follows the approach given by Tanaka in \cite{Tn94a}. First we establish a lemma that corresponds to Lemma 1.3 in \cite{Tn94a}. 

\begin{lm} \label{lm: bound energy and convexity} 
\begin{enumerate}
\item[(a).] There is a constant $C_1>0$ independent of $n$, such that 
\begin{equation}
\label{eq: up bound dot etn 1} |\dot{\etn_1}(t)|^2 \le \frac{2(m_1+m_2)}{|\etn_1(t)|^{\al}} + \frac{2\en(m_1+m_2)}{|\etn_1(t)|^2}+C_1, \;\; \forall t \in [t_n-a, t_n+a].  
\end{equation}
\item[(b).] There is a constant $\ell_0>0$, such that for $n$ large enough, if $t \in [t_n-a, t_n+a]$ and $\etn_1(t) \in B_{\ell_0}$, then $\frac{d^2}{dt^2}|\etn_1(t)|^2 >0$, where $B_{\ell}=\{ x \in \rr^d: \; |x| \le \ell \}$ for any $\ell >0$.
\end{enumerate} 
\end{lm}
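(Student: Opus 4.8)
The plan is to read both statements off the energy identity for the two-body cluster $\I=\{1,2\}$, combined with the second-order equation \eqref{eq: etn 1} for $\etn_1$; this is the analogue of Lemma~1.3 of \cite{Tn94a}.

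For part (a), I would first express the cluster energy $E^{\en}_{\I}(t)=E^{\en}_{\I}(\qn(t),\dot{\qn}(t))$ in the coordinates \eqref{eq: etn} (the kinetic part splits into relative and centre-of-mass pieces exactly as in \eqref{eq: L etn}), which gives
\begin{equation*}
\frac{m_1m_2}{2(m_1+m_2)}|\dot{\etn_1}(t)|^2 = E^{\en}_{\I}(t) - \frac{m_1+m_2}{2}|\dot{\etn_2}(t)|^2 + \frac{m_1m_2}{|\etn_1(t)|^{\al}} + \frac{\en m_1m_2}{|\etn_1(t)|^2}.
\end{equation*}
Since $\frac{m_1+m_2}{2}|\dot{\etn_2}|^2\ge0$ and $|E^{\en}_{\I}(t)|\le C_2$ on $[t_n-a,t_n+a]$ for $n$ large (cf.\ \eqref{eq: E en I bound}, from the proof of Proposition \ref{prop: weak solution to generalized}), multiplying through by $\frac{2(m_1+m_2)}{m_1m_2}$ and discarding the $|\dot{\etn_2}|^2$ term yields \eqref{eq: up bound dot etn 1} with $C_1=\frac{2(m_1+m_2)}{m_1m_2}C_2$.

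For part (b), I would start from $\frac{d^2}{dt^2}|\etn_1|^2=2|\dot{\etn_1}|^2+2\langle\etn_1,\ddot{\etn_1}\rangle$. Pairing \eqref{eq: etn 1} with $\etn_1$ gives
\begin{equation*}
\begin{split}
\langle\etn_1,\ddot{\etn_1}\rangle &= -\frac{\al(m_1+m_2)}{|\etn_1|^{\al}}-\frac{2\en(m_1+m_2)}{|\etn_1|^{2}} \\
&\quad +\frac{m_1+m_2}{m_1m_2}\Big\langle\etn_1,\ \frac{\partial\ut(\etn)}{\partial\etn_1}+\en\frac{\partial\fut(\etn)}{\partial\etn_1}\Big\rangle,
\end{split}
\end{equation*}
while the energy identity from (a), now used with $|\dot{\etn_2}(t)|\le C_3$ (cf.\ \eqref{eq: etn bound}) and $|E^{\en}_{\I}(t)|\le C_2$, gives the matching lower bound
\begin{equation*}
|\dot{\etn_1}(t)|^2 \ge \frac{2(m_1+m_2)}{|\etn_1(t)|^{\al}}+\frac{2\en(m_1+m_2)}{|\etn_1(t)|^{2}}-C_1'
\end{equation*}
with $C_1'>0$ independent of $n$. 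Substituting both into $\frac{d^2}{dt^2}|\etn_1|^2$, the two singular terms $\en/|\etn_1|^2$ cancel exactly and one is left with
\begin{equation*}
\begin{split}
\frac{d^2}{dt^2}|\etn_1(t)|^2 &\ge \frac{2(2-\al)(m_1+m_2)}{|\etn_1(t)|^{\al}}-2C_1' \\
&\quad +\frac{2(m_1+m_2)}{m_1m_2}\Big\langle\etn_1,\ \frac{\partial\ut(\etn)}{\partial\etn_1}+\en\frac{\partial\fut(\etn)}{\partial\etn_1}\Big\rangle.
\end{split}
\end{equation*}
By \eqref{eq: bound partial Ut Vt} the last term is $O(|\etn_1(t)|)$ uniformly in $n$, hence negligible for small $|\etn_1(t)|$; since $\al\in(0,2)$ the term $\frac{2(2-\al)(m_1+m_2)}{|\etn_1(t)|^{\al}}$ tends to $+\infty$ as $|\etn_1(t)|\to0$. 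Choosing $\ell_0>0$ small enough that this positive singular term dominates $2C_1'$ plus the $O(|\etn_1|)$ remainder on $B_{\ell_0}$ then gives, for $n$ large, $\frac{d^2}{dt^2}|\etn_1(t)|^2>0$ whenever $\etn_1(t)\in B_{\ell_0}$.

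The computation itself is routine; the one point that deserves attention is the exact cancellation of the $\en/|\etn_1|^2$ contributions coming from $2|\dot{\etn_1}|^2$ and from $2\langle\etn_1,\ddot{\etn_1}\rangle$. It is precisely this cancellation that spares the argument from having to compare the magnitudes of $\en$ and $|\etn_1(t)|$ (the dichotomy $\lmd<\infty$ versus $\lmd=\infty$ that governs the blow-up analysis), and it is the inequality $\al<2$ that makes the remaining weak-force term $\frac{2(2-\al)(m_1+m_2)}{|\etn_1|^{\al}}$ help the convexity rather than hurt it. The rest is bookkeeping: checking that $C_1$, $C_1'$ and the constant in \eqref{eq: bound partial Ut Vt} can be taken independent of $n$ — which is where the uniform bounds \eqref{eq: E en I bound}, \eqref{eq: etn bound} and \eqref{eq: bound partial Ut Vt} enter — followed by the elementary choice of $\ell_0$.
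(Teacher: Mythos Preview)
Your proposal is correct and follows essentially the same approach as the paper: both parts are read off the cluster-energy identity for $\I=\{1,2\}$ in the $\etn$-coordinates together with equation \eqref{eq: etn 1}, with the $\en/|\etn_1|^2$ terms cancelling in (b) to leave the dominant $(2-\al)(m_1+m_2)/|\etn_1|^{\al}$ term. The only cosmetic difference is that the paper substitutes the exact energy identity for $|\dot{\etn_1}|^2$ (keeping the $|\dot{\etn_2}|^2$ term) rather than first passing to a lower bound, but the resulting estimate is the same.
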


\begin{proof}
(a). Let $\I= \{1,2\}$. Recall that $E^{\en}_{\I}(t)= E^{\en}_{\I}(\qn(t), \dot{\qn}(t))$ is the energy of the $\I$-cluster. By \eqref{eq: etn},
\begin{equation}
 \label{eq: energy 12 etn} E^{\en}_{\I}(t) = \frac{m_1m_2}{2(m_1+m_2)}|\dot{\etn_1}|^2+\frac{m_1+m_2}{2} |\dot{\etn_2}|^2-\frac{m_1m_2}{|\etn_1|^{\al}}-\en \frac{m_1m_2}{|\etn_1|^2}.
 \end{equation} 
As a result,
$$ |\dot{\etn_1}(t)|^2 \le \frac{2(m_1+m_2)}{|\etn_1(t)|^{\al}}+2\en \frac{m_1+m_2}{|\etn_1(t)|^2}-\frac{(m_1+m_2)^2}{m_1m_2}|\dot{\etn_2}(t)|^2 + \frac{2(m_1+m_2)}{m_1m_2} E_{\I}^{\en}(t). $$
Then property (a) following from \eqref{eq: E en I bound} and \eqref{eq: etn bound}. 

(b). By a direct computation, 
\begin{equation}
\begin{split}
\ey \frac{d^2}{dt^2} |\etn_1|^2 &= |\dot{\etn_1}|^2+ \langle \etn_1, \ddot{\etn_1} \rangle =  |\dot{\etn_1}|^2-\al \frac{m_1+m_2}{|\etn_1|^{\al}}-2\en \frac{m_1+m_2}{|\etn_1|^2} \\
& \quad + \frac{m_1+m_2}{m_1m_2} \langle \etn_1, \frac{\partial \ut}{\partial \etn_1}+ \en\frac{\partial \fut}{\partial \etn_1} \rangle.  
\end{split}
\end{equation}
Use \eqref{eq: E en I bound} and \eqref{eq: energy 12 etn} , we can find a positive constant $C_2$, such that
$$ \ey \frac{d^2}{dt^2} |\etn_1|^2 \ge (2-\al)\frac{m_1+m_2}{|\etn_1|^{\al}}-\frac{m_1+m_2}{2}|\dot{\etn_2}|^2 + \frac{m_1+m_2}{m_1m_2} \langle \etn_1, \frac{\partial \ut}{\partial \etn_1}+ \en\frac{\partial \fut}{\partial \etn_1} \rangle- C_2,$$
which clearly implies property (b). 

\end{proof}

Again following \cite{Tn94a}, we introduce the following functions
$$ e_n(t)= \sqrt{|\etn_1(t)|^2 |\dot{\etn_1}(t)|^2 - \langle \etn_1(t), \dot{\etn_1}(t)\rangle^2}, \;\; \om(t)= \frac{e_n(t)}{|\etn_1(t)| |\dot{\etn_1}(t)|}. $$
Notice that $\om_n(t)$ is well defined, when $\etn_1(t) \ne 0$ and $\dot{\etn_1}(t) \ne 0$. In particular, $\om_n(t) = \sin (\angle(\etn_1(t)/|\etn_1(t)|, \dot{\etn_1}(t))/|\dot{\etn_1}(t)|)$ and $|\om_n(t)| \le 1$. By \eqref{eq: etn 1},
$$ \frac{d e_n}{dt}= \frac{m_1+m_2}{m_1m_2} \left \langle \frac{|\etn_1|^2 |\dot{\etn_1}|^2-\langle \etn_1, \dot{\etn_1} \rangle \etn}{\sqrt{|\etn_1|^2 |\dot{\etn_1}|^2-\langle \etn_1, \dot{\etn_1} \rangle^2}}, \frac{\partial\ut(\etn)}{\partial{\etn_1}}+\en \frac{\partial \fut(\etn)}{\partial{\etn_1}} \right \rangle. $$
According to \eqref{eq: bound partial Ut Vt}, there are positive constants $C_3, C_4$ independent of $n$, such that 
\begin{equation}
\label{eq: bound dot e} |\dot{e}_n(t)|\le C_3 |\etn_1(t)| \cdot \left|\frac{\partial \ut(\etn_1(t))}{\partial \etn_1}+ \en\frac{\partial \fut(\etn_1(t))}{\partial \etn_1} \right| \le C_4 |\etn_1(t)|, \;\; \forall t \in [t_n-a, t_n+a]. 
\end{equation}

Again by \eqref{eq: etn 1}, a direct computation shows
\begin{equation}
\begin{split}
\frac{d \om_n}{dt} &= \frac{\dot{e}_n}{|\etn_1||\dot{\etn_1}|}-\frac{\om_n}{|\etn_1|^2 |\dot{\etn_1}|^2} \Big( |\dot{\etn_1}|^2 \langle \etn_1, \dot{\etn_1}\rangle + |\etn_1|^2 \langle \dot{\etn_1}, \ddot{\etn_1} \rangle \Big) \\
&=\frac{\dot{e}_n}{|\etn_1||\dot{\etn_1}|} -\frac{m_1+m_2}{m_1m_2} \frac{\om_n}{|\dot{\etn_1}|^2}\langle \dot{\etn_1}, \frac{\partial \ut}{\partial \etn_1}+ \en\frac{\partial \fut}{\partial \etn_1} \rangle \\ 
& \quad - \om_n \frac{\langle \etn_1, \dot{\etn_1} \rangle}{|\etn_1|^2 |\dot{\etn_1}|^2} \Big(|\dot{\etn_1}|^2-\al \frac{m_1+m_2}{|\etn_1|^{\al}}-2\en\frac{m_1+m_2}{|\etn_1|^2} \Big) \\
\end{split}
\end{equation}
Then \eqref{eq: up bound dot etn 1} and \eqref{eq: bound dot e} implies,
\begin{equation} \label{eq: om dot}
\begin{split}
\dot{\om}_n(t) & \le \frac{C_5}{|\dot{\etn_1}|} \left|\frac{\partial \ut}{\partial \etn_1}+ \en\frac{\partial \fut}{\partial \etn_1} \right|- \om_n \frac{\langle \etn_1, \dot{\etn_1} \rangle}{|\etn_1|^2 |\dot{\etn_1}|^2} \Big( (2-\al)\frac{m_1+m_2}{|\etn_1|^{\al}}-C_6 \Big) \\
& \le \left(C_5|\etn_1|\left|\frac{\partial \ut}{\partial \etn_1}+ \en\frac{\partial \fut}{\partial \etn_1}\right| -\om_n \sqrt{1-\om_n^2} \Big((2-\al)\frac{m_1+m_2}{|\etn_1|^{\al}}-C_6\Big) \right) \frac{1}{|\etn_1||\dot{\etn_1}|}, 
\end{split}
\end{equation}
where $C_5$ is positive constant and $C_6$ is a constant, whose sign depends on the sign of $\langle \etn_1, \dot{\etn_1} \rangle$), independent of $t \in [t_n-a, t_n+a]$ and $n$. 

With Lemma \ref{lm: bound energy and convexity} and \eqref{eq: om dot} (this corresponds to ($1.8$) in \cite{Tn94a}), the next result can be proven following the argument given in Proposition 1.4 and 1.5 in \cite{Tn94a} line by line, and we will not repeat it here.
\begin{lm}
 \label{lm: eta 1 cauchy} Let $\ell_0$ be the constant given in Lemma \ref{lm: bound energy and convexity}, for any $b>0$ small enough, there exist constants $0 < \ell_2 <\ell_0 <\ell_1$, such that when $n$ is large enough, for any 
$$ t_n < t < t^* < t_n +a, \; \text{ or } \; t_n-a < t < t^* < t_n, $$
if $\etn_1(t)$ and $\etn_1(t^*) \in B_{\ell_2} \setminus B_{\dn \ell_1} $, then $ \Big| \etn_1(t)/|\etn_1(t)| - \etn_1(t^*)/|\etn_1(t^*)| \Big| < b.$
 \end{lm}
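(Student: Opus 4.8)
The plan is to reduce the statement to a one-dimensional estimate for the winding of the unit direction $u_n:=\etn_1/|\etn_1|$, and then to control that winding using the near-conservation of the angular momentum $e_n$ together with the convexity from Lemma~\ref{lm: bound energy and convexity}(b). Writing $r:=|\etn_1|$, a direct computation gives $|\dot u_n|=e_n/r^2$ and, recalling $\om_n=e_n/(r|\dot\etn_1|)$, one has $|\dot u_n|=\om_n|\dot\etn_1|/r$; hence $|u_n(t)-u_n(t^*)|\le\int_t^{t^*}|\dot u_n|\,ds$. First I would use Lemma~\ref{lm: bound energy and convexity}(b) together with the fact that $t_n$ is a global minimum of $r$ on $[t_n-a,t_n+a]$: since $(d^2/dt^2)r^2>0$ while $\etn_1\in B_{\ell_0}$ and $(d/dt)r^2$ vanishes at $t_n$, the radius $r$ is strictly monotone on each side of $t_n$ as long as the orbit stays in $B_{\ell_0}$. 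Because the collision is isolated (Proposition~\ref{prop: isolated binary coll}), for $n$ large this excursion is single, so for $\ell_2<\ell_0$ and endpoints $\etn_1(t),\etn_1(t^*)\in B_{\ell_2}$ the whole segment $[t,t^*]$ lies in $B_{\ell_2}$ with $r$ monotone. Changing the integration variable from $s$ to $r$ via $\dot r=\pm\sqrt{|\dot\etn_1|^2-e_n^2/r^2}$ then yields
\[
|u_n(t)-u_n(t^*)|\le\int\frac{\om_n}{\sqrt{1-\om_n^2}}\,\frac{dr}{r},\qquad r(t),r(t^*)\in(\dn\ell_1,\ell_2],
\]
the integral being taken over the radial interval between $r(t)$ and $r(t^*)$, so the lemma is reduced to making this integral smaller than $b$.

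The heart of the matter is to show $\om_n$ is small in a graded way across the annulus. For the inner part I would exploit that $e_n$ is almost conserved: by \eqref{eq: bound dot e}, $|\dot e_n|\le C_4 r$, while the energy identity behind \eqref{eq: xih energy} gives $|\dot\etn_1|^2\ge 2(m_1+m_2)r^{-\al}$ up to lower-order terms, and the blow-up \eqref{eq: blow up xin} fixes the scale $e_n(t_n)=\dn|\dot\etn_1(t_n)|=O(\dn^{1-\al/2})$. A short bootstrap (using the resulting lower bound $\dot r\gtrsim r^{-\al/2}\sqrt{1-(\dn/r)^{2-\al}}$ to estimate $\int r\,d\tau$) shows the change of $e_n$ out to radius $r$ is $O(r^{2+\al/2})$, negligible against $\dn^{1-\al/2}$ for $r$ small; hence $e_n\approx e_n(t_n)$ and $\om_n\approx C(\dn/r)^{1-\al/2}$ near perihelion, giving $\int_{\dn\ell_1}\om_n\,dr/r=O(\ell_1^{-(1-\al/2)})$, which is small once $\ell_1$ is large. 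For the outer part, where $r$ is bounded away from $\dn$, the same energy bound makes $\dot r$ of order $r^{-\al/2}$, and with $e_n=O(r^{2+\al/2})$ one gets $\om_n=O(r^{1+\al})$, so $\int^{\ell_2}\om_n\,dr/r=O(\ell_2^{1+\al})$, small once $\ell_2$ is small. Summing the two contributions makes the displayed integral $<b$. This is precisely the content of Tanaka's Propositions~1.4 and~1.5 in \cite{Tn94a}, whose barrier argument for the differential inequality \eqref{eq: om dot} furnishes the same smallness of $\om_n$ directly.

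The main obstacle is the logarithmic weight $dr/r$: a mere uniform bound $\om_n\le\om^{*}$ is useless, since it only gives $\om^{*}\log(\ell_2/(\dn\ell_1))$, which diverges as $\dn\to0$. One must genuinely use the $r$-dependence of $\om_n$, namely its decay like $(\dn/r)^{1-\al/2}$ near the closest approach (tamed by choosing $\ell_1$ large) and, far from collision, the largeness of the radial speed $\dot r$ that suppresses the angular contribution (tamed by choosing $\ell_2$ small). Two further points need care: the monotonicity and single-excursion claim, which relies on isolation so that $|\etn_1|$ has a single interior minimum on the relevant side of $t_n$; and the bootstrap controlling $e_n$ right at perihelion, where $\om_n$ is close to $1$ and the change of variables degenerates, so that $\int r\,d\tau$ must be estimated directly from the energy rather than through $\om_n$. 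When $\lmd>0$ the term $2\en(m_1+m_2)r^{-2}$ modifies the energy near perihelion, but it only changes constants, since $e_n(t_n)^2=2(m_1+m_2)(1+\lmd)\dn^{2-\al}(1+o(1))$ keeps the same scale.
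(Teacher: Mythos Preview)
Your proposal is correct and follows the same route as the paper, which simply defers to Tanaka's Propositions~1.4 and~1.5 in \cite{Tn94a} after recording Lemma~\ref{lm: bound energy and convexity} and the differential inequality \eqref{eq: om dot}. Your reconstruction emphasizes the near-conservation of the angular momentum $e_n$ (via \eqref{eq: bound dot e}) together with the energy bound to obtain the graded estimate $\om_n\lesssim(\dn/r)^{1-\al/2}+r^{1+\al}$ and then integrates against $dr/r$; the paper instead singles out \eqref{eq: om dot} as the input to Tanaka's barrier/comparison argument for $\om_n$. These are two packagings of the same mechanism, and both deliver $\int \om_n(1-\om_n^2)^{-1/2}\,dr/r\lesssim \ell_1^{-(1-\al/2)}+\ell_2^{1+\al}$, made small by choosing $\ell_1$ large and $\ell_2$ small.
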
 

Now we give a proof of Lemma \ref{lem: asymp angle}.
\begin{proof} [Proof of Lemma \ref{lem: asymp angle}] By property (c) in Lemma \ref{lm: lim xin}, $\lim_{s \to +\infty} \xi_1(s)/|\xi_1(s)|= u^+$. Fix an arbitrary small $b>0$, let $\ell_1, \ell_2$ be the constants given in  Lemma \ref{lm: eta 1 cauchy}. We can choose an $s^*>0$ large enough, such that for $n$ large enough, 
$$ \ell_1 < |\xi_1(s^*)| < \dl_n^{-1} \ell_2, \; \text{ and } \;  \left| \frac{\xi_1(s^*)}{|\xi_1(s^*)|}- u^+ \right| < b. $$  
By Lemma \ref{lm: lim xin}, the same inequalities hold for $\xin_1(s^*)$, when $n$ is large enough. 

Let $t^*= t_n+\dn^{1+\frac{\al}{2}}s^*$, by equation \eqref{eq: blow up xin}, 
$$ \etn_1(t^*) \in B_{\ell_2} \setminus B_{\dn \ell_1}, \text{ and } |\etn_1(t^*)/|\etn_1(t^*)| -u^+| < b. $$
Since $\ell_2 < \ell_0$, for any $t \in (t^*, t_n +a)$, we claim if $\etn_1(t) \in B_{\ell_2}$, then $|\etn_1(t)| > \dn \ell_1.$ Indeed this follows from the fact $d |\etn_1(t_n)|/dt=0$ and property (b) in Lemma \ref{lm: bound energy and convexity}. 

As a result, for any  $t \in (t^*, t_n +a)$,  $\etn_1(t) \in B_{\ell_2}$ implies $\etn_1(t) \in B_{\ell_2} \setminus B_{\dn \ell_1}$. Then by Lemma \ref{lm: eta 1 cauchy}, 
$$ \left| \frac{\etn_1(t)}{|\etn_1(t)|} - u^+ \right| \le \left| \frac{\etn_1(t)}{|\etn_1(t)|}-\frac{\etn_1(t^*)}{|\etn_1(t^*)|} \right| + \left| \frac{\etn_1(t^*)}{|\etn_1(t^*)|}-u^+ \right| \le 2b. $$
This means 
$$ (\{ \etn_1(t): \; t \in (t_n+ \dn^{1+\frac{\al}{2}}s^*, t_n+a) \} \cap B_{\ell_2}) \subset (\{x \in B_{\ell_2}: \; |x/x|-u^+| < 2b \} \cup B_{\dn \ell_1}). $$
Recall that when $n$ goes to infinity, $t_n$ converges to $t_0$, $\dn$ converges to zero, and $\etn_1(t)$ converges uniformly to $q_2(t)-q_1(t)$. Then 
$$ (\{q_2(t)-q_1(t): \; t \in (t_0, t_0+a) \} \cap B_{l_2}) \subset \{ x \in B_{l_2}:\; |x/x|-u^+| < 2b \}. $$
Since the above result hold for any $b>0$ small enough, we get 
$$ \lim_{t \to +\infty} \frac{q_2(t)-q_1(t)}{|q_2(t)-q_1(t)|} = u^+. $$
A similar argument shows 
$$ \lim_{t \to -\infty}  \frac{q_2(t)-q_1(t)}{|q_2(t)-q_1(t)|} = u^-. $$
\end{proof}

\emph{Acknowledgements.} 
The author thanks Alain Chenciner for a careful reading of an early draft of the paper and Richard Montgomery for pointing out a mistake. Valuable discussions with Vivina Barutello, Jacques F\'ejoz and Xijun Hu are also appreciated. The main part of the work was done when the author was a postdoc and a visitor at Ceremade in University of Paris-Dauphine, IMCCE in the Paris Observatory and School of Mathematics in Shandong University. He thanks the hospitality of these institutes and the financial support of FSMP and NSFC(No.11425105).

\bibliographystyle{abbrv}
\bibliography{RefMountPass}

\def\cprime{$'$}
\begin{thebibliography}{10}

\bibitem{AZ93}
A.~Ambrosetti and V.~Coti~Zelati.
\newblock {\em Periodic solutions of singular {L}agrangian systems}, volume~10
  of {\em Progress in Nonlinear Differential Equations and their Applications}.
\newblock Birkh\"auser Boston, Inc., Boston, MA, 1993.

\bibitem{ABT06}
G.~Arioli, V.~Barutello, and S.~Terracini.
\newblock A new branch of {M}ountain {P}ass solutions for the choreographical
  3-body problem.
\newblock {\em Comm. Math. Phys.}, 268(2):439--463, 2006.

\bibitem{BR89}
A.~Bahri and P.~H. Rabinowitz.
\newblock A minimax method for a class of {H}amiltonian systems with singular
  potentials.
\newblock {\em J. Funct. Anal.}, 82(2):412--428, 1989.

\bibitem{BR91}
A.~Bahri and P.~H. Rabinowitz.
\newblock Periodic solutions of {H}amiltonian systems of {$3$}-body type.
\newblock {\em Ann. Inst. H. Poincar\'e Anal. Non Lin\'eaire}, 8(6):561--649,
  1991.

\bibitem{BHPT17}
V.~Barutello, X.~Hu, A.~Portaluri, and S.~Terracini.
\newblock An index theory for asymptotic motions under singular potentials.
\newblock Preprint, 2017, arxiv 1705.01291.

\bibitem{BS08}
V.~Barutello and S.~Secchi.
\newblock Morse index properties of colliding solutions to the {$N$}-body
  problem.
\newblock {\em Ann. Inst. H. Poincar\'e Anal. Non Lin\'eaire}, 25(3):539--565,
  2008.

\bibitem{Ch08}
K.-C. Chen.
\newblock {Existence and minimizing properties of retrograde orbits to the
  three-body problem with various choices of masses}.
\newblock {\em Ann. of Math. (2)}, 167(2):325--348, 2008.

\bibitem{C02}
A.~Chenciner.
\newblock {Action minimizing solutions of the {N}ewtonian {$n$}-body problem:
  from homology to symmetry}.
\newblock In {\em {Proceedings of the {I}nternational {C}ongress of
  {M}athematicians, {V}ol. {III} ({B}eijing, 2002)}}, pages 279--294. Higher
  Ed. Press, Beijing, 2002.

\bibitem{CGMS02}
A.~Chenciner, J.~Gerver, R.~Montgomery, and C.~Sim\'o.
\newblock Simple choreographic motions of {$N$} bodies: a preliminary study.
\newblock In {\em Geometry, mechanics, and dynamics}, pages 287--308. Springer,
  New York, 2002.

\bibitem{CM00}
A.~Chenciner and R.~Montgomery.
\newblock {A remarkable periodic solution of the three-body problem in the case
  of equal masses}.
\newblock {\em Ann. of Math. (2)}, 152(3):881--901, 2000.

\bibitem{CV00}
A.~Chenciner and A.~Venturelli.
\newblock Minima de l'int\'egrale d'action du probl\`eme newtonien de 4 corps
  de masses \'egales dans {${\bf R}^3$}: orbites ``hip-hop''.
\newblock {\em Celestial Mech. Dynam. Astronom.}, 77(2):139--152 (2001), 2000.

\bibitem{FT04}
D.~L. Ferrario and S.~Terracini.
\newblock {On the existence of collisionless equivariant minimizers for the
  classical {$n$}-body problem}.
\newblock {\em Invent. Math.}, 155(2):305--362, 2004.

\bibitem{Go77}
W.~B. Gordon.
\newblock {A minimizing property of {K}eplerian orbits}.
\newblock {\em Amer. J. Math.}, 99(5):961--971, 1977.

\bibitem{Hf85}
H.~Hofer.
\newblock A geometric description of the neighbourhood of a critical point
  given by the mountain-pass theorem.
\newblock {\em J. London Math. Soc. (2)}, 31(3):566--570, 1985.

\bibitem{HY17}
X.~Hu and G.~Yu.
\newblock Index theory for zero energy solutions of the planar anisotropic
  kepler problem.
\newblock Preprint, 2017, arXiv:1705.05645.

\bibitem{KS65}
P.~Kustaanheimo and E.~Stiefel.
\newblock Perturbation theory of {K}epler motion based on spinor
  regularization.
\newblock {\em J. Reine Angew. Math.}, 218:204--219, 1965.

\bibitem{MT93a}
P.~Majer and S.~Terracini.
\newblock Periodic solutions to some {$n$}-body type problems: the fixed energy
  case.
\newblock {\em Duke Math. J.}, 69(3):683--697, 1993.

\bibitem{MT93b}
P.~Majer and S.~Terracini.
\newblock Periodic solutions to some problems of {$n$}-body type.
\newblock {\em Arch. Rational Mech. Anal.}, 124(4):381--404, 1993.

\bibitem{Mc02}
C.~Marchal.
\newblock How the method of minimization of action avoids singularities.
\newblock {\em Celestial Mech. Dynam. Astronom.}, 83(1-4):325--353, 2002.
\newblock Modern celestial mechanics: from theory to applications (Rome, 2001).

\bibitem{Mg74}
R.~McGehee.
\newblock Triple collision in the collinear three-body problem.
\newblock {\em Invent. Math.}, 27:191--227, 1974.

\bibitem{MG81}
R.~McGehee.
\newblock Double collisions for a classical particle system with
  nongravitational interactions.
\newblock {\em Comment. Math. Helv.}, 56(4):524--557, 1981.

\bibitem{Mo98}
R.~Montgomery.
\newblock The {$N$}-body problem, the braid group, and action-minimizing
  periodic solutions.
\newblock {\em Nonlinearity}, 11(2):363--376, 1998.

\bibitem{Tn93a}
K.~Tanaka.
\newblock Noncollision solutions for a second order singular {H}amiltonian
  system with weak force.
\newblock {\em Ann. Inst. H. Poincar\'e Anal. Non Lin\'eaire}, 10(2):215--238,
  1993.

\bibitem{Tn93b}
K.~Tanaka.
\newblock A prescribed energy problem for a singular {H}amiltonian system with
  a weak force.
\newblock {\em J. Funct. Anal.}, 113(2):351--390, 1993.

\bibitem{Tn94a}
K.~Tanaka.
\newblock A note on generalized solutions of singular {H}amiltonian systems.
\newblock {\em Proc. Amer. Math. Soc.}, 122(1):275--284, 1994.

\bibitem{Tn94b}
K.~Tanaka.
\newblock A prescribed-energy problem for a conservative singular {H}amiltonian
  system.
\newblock {\em Arch. Rational Mech. Anal.}, 128(2):127--164, 1994.

\bibitem{Y15b}
G.~Yu.
\newblock {Shape Space Figure-$8$ Solution of Three Body Problem with Two Equal
  Masses}.
\newblock 2015.
\newblock accepted by \emph{Nonlinearity}, arXiv:1507.02892.

\bibitem{Yu17}
G.~Yu.
\newblock Simple choreographies of the planar {N}ewtonian {$N$}-body problem.
\newblock {\em Arch. Ration. Mech. Anal.}, 225(2):901--935, 2017.

\end{thebibliography}

\end{document}